\newcommand{\sample}{343}
\newcommand{\graphRatio}{.63}
\DeclareMathOperator{\CSO}{CSO}
\DeclareMathOperator{\arccosh}{arccosh}
\DeclareMathOperator{\conv}{conv}
\newcommand{\Cm}{C_m}
\newcommand{\WdH}{W_{\mathrm{dH}}}
\newcommand{\Wlog}{W_{\mathrm{log}}}
\newcommand{\opnorm}[1]{\norm{#1}_{\mathrm{op}}}
\newcommand{\de}{\mathrm{dist}_{\mathrm{euclid}}}
\DeclareMathOperator*{\esssup}{ess\,sup}
\newcommand{\citet}[2][]{\citeauthor{#2} \cite[#1]{#2}}
\begin{document}
\title{The quasiconvex envelope of conformally invariant planar energy functions in isotropic hyperelasticity}
\date{\today}
\author{%
	Robert J. Martin%
	\thanks{\;\;%
		Corresponding author: Robert J. Martin, Lehrstuhl f\"{u}r Nichtlineare Analysis und Modellierung, Fakult\"at f\"ur Mathematik, Universit\"at Duisburg--Essen, Campus Essen, Thea-Leymann Stra{\ss}e 9, 45141 Essen, Germany,
		email: \href{mailto:robert.martin@uni-due.de}{robert.martin@uni-due.de}%
	},
	\quad
	Jendrik Voss\thanks{\;\;%
		Jendrik Voss, Lehrstuhl f\"{u}r Nichtlineare Analysis und Modellierung, Fakult\"{a}t f\"{u}r Mathematik, Universit\"{a}t Duisburg--Essen, Thea-Leymann Str. 9, 45127 Essen, Germany;
		email: \href{mailto:max.voss@uni-due.de}{max.voss@uni-due.de}%
	},
	\quad
	Ionel-Dumitrel Ghiba\thanks{\;\;%
		Ionel-Dumitrel Ghiba, Alexandru Ioan Cuza University of Ia\c si, Department of Mathematics, Blvd.~Carol I, no.\,11, 700506 Ia\c si, Romania; and Octav Mayer Institute of Mathematics of the Romanian Academy, Ia\c si Branch, 700505 Ia\c si,
		email: \href{mailto:dumitrel.ghiba@uaic.ro}{dumitrel.ghiba@uaic.ro}%
	},
	\\[.21em]
	Oliver Sander\thanks{\;\;%
		Oliver Sander, Institut für Numerische Mathematik, Technische Universität Dresden, Zellescher Weg 12--14, 01069 Dresden, Germany,
		email: \href{mailto:oliver.sander@tu-dresden.de}{oliver.sander@tu-dresden.de}%
	}
	~\quad and\quad%
	Patrizio Neff\thanks{\;\;%
		Patrizio Neff, Head of Lehrstuhl f\"{u}r Nichtlineare Analysis und Modellierung, Fakult\"at f\"ur Mathematik, Universit\"at Duisburg--Essen, Campus Essen, Thea-Leymann Stra{\ss}e 9, 45141 Essen, Germany,
		email: \href{mailto:patrizio.neff@uni-due.de}{patrizio.neff@uni-due.de}%
	}
}
\maketitle
\vspace*{-1.4em}
\begin{center}
	Dedicated to Philippe G.~Ciarlet on the occasion of his 80$^\text{th}$ birthday.
\end{center}
\vspace*{1.4em}
\begin{abstract}
	\noindent
	We consider \emph{conformally invariant} energies $W$ on the group $\GLp(2)$ of $2\times2$-matrices with positive determinant, i.e.\ $W\col\GLp(2)\to\R$ such that
	\[
		W(A\.F\.B) = W(F) \qquad\text{for all }\; A,B\in\{a\.R\in\GLp(2) \setvert a\in(0,\infty)\,,\; R\in\SO(2)\}\,,
	\]
	where $\SO(2)$ denotes the special orthogonal group, and provide an explicit formula for the (notoriously difficult to compute) \emph{quasiconvex envelope} of these functions. Our results, which are based on the representation $W(F)=h(\frac{\lambda_1}{\lambda_2})$ of $W$ in terms of the singular values $\lambda_1,\lambda_2$ of $F$, are applied to a number of example energies in order to demonstrate the convenience of the eigenvalue-based expression compared to the more common representation in terms of the distortion $\K\colonequals\frac12\frac{\norm{F}^2}{\det F}$. Special cases of our results can be obtained from earlier works by Astala et al.\ \cite{astala2008elliptic} and Yan \cite{yan2003baire}.
\end{abstract}

\vspace*{1.4em}
\noindent{\textbf{Key words:} conformal invariance, quasiconvexity, rank-one convexity, quasiconvex envelopes, nonlinear elasticity, finite isotropic elasticity, hyperelasticity, quasiconformal maps, distortion, linear distortion, polyconvexity, relaxation, microstructure, conformal energy, Grötzsch problem, Teichmüller mapping}
\\[1.4em]
\noindent\textbf{AMS 2010 subject classification:
	26B25, %
	26A51, %
	30C70, %
	30C65, %
	49J45, %
	74B20  %
}
\newpage
\tableofcontents

\section{Introduction}
A recent contribution \cite{agn_martin2015rank} introduced a number of criteria for generalized convexity properties (including quasiconvexity) of so-called \emph{conformally invariant functions} (or \emph{energies}) on the group $\GLp(2)$ of $2\times2$-matrices with positive determinant, i.e.\ functions $W\col\GLp(2)\to\R$ with
\begin{equation}\label{eq:definitionConformalInvariance}
	W(A\.F\.B) = W(F) \qquad\text{for all }\; A,B\in\CSO(2)\,,
\end{equation}
where
\[
	\CSO(2)\colonequals\R^+\cdot\SO(2)=\{a\.R\in\GLp(2) \setvert a\in(0,\infty)\,,\; R\in\SO(2)\}
\]
denotes the \emph{conformal special orthogonal group}.\footnote{%
\label{footnote:conformalEnergy}
	Note that this invariance property needs to be distinguished from the concept of (nearly) \emph{conformal energies} \cite{iwaniec1996polyconvex,yan1997rank}, i.e.\ functions $W\geq 0$ such that $W(F)=0$ if and only if $F\in\CSO(2)$, e.g.\ $W(F)=\norm{F}^2-2\det F$. Instead of invariances of the argument, these energies are characterized by a global \enquote{potential well} containing the unbounded set $\CSO(2)$ and can merely be considered \enquote{conformally invariant in $F=\id$}.
	
	In a planar minimization problem subject to the homeomorphic boundary condition $\varphi|_{\partial\Omega}=\varphi_0$, the 2-harmonic Dirichlet energy $I(\varphi)=\int_{\Omega}\frac{1}{2}\.\norm{\nabla\varphi}^2\.\dx$ is sometimes referred to as a conformal energy as well. Indeed,
	\[
		I(\varphi)=\int_{\Omega}\frac{1}{2}\.\norm{\nabla\varphi}^2\,\dx\geq \int_{\Omega}\det\nabla\varphi(x)\,\dx = \int_{\Omega}\det \nabla\varphi_0\,\dx\,,
	\] 
	and equality holds
	if and only if $\varphi$ is conformal, due to Hadamard's inequality and the fact that $\det\nabla\varphi$ is a null Lagrangian. However, the energy density $W(F)=\frac{1}{2}\.\norm{F}^2$ is neither conformally invariant in the sense of \eqref{eq:definitionConformalInvariance} nor (nearly) conformal in the above sense.%
}
This requirement can equivalently be expressed as
\begin{equation}\label{eq:objectiveIsotropicIsochoric}
	W(R_1F)=W(F)=W(FR_2)\,,\quad W(aF)=W(F) \qquad\text{for all }\; R_1,R_2\in\SO(2)\,,\; a\in(0,\infty)\,,
\end{equation}
i.e.\ left- and right-invariance under the special orthogonal group $\SO(2)$ and invariance under scaling. In nonlinear elasticity theory, where $F=\grad\varphi$ represents the so-called deformation gradient of a deformation~$\varphi$, the former two invariances correspond to the \emph{objectivity} and \emph{isotropy} of $W$, respectively. In this context, an energy $W$ satisfying $W(aF)=W(F)$ is more commonly known as \emph{isochoric}, and is often additively coupled \cite{richter1949verzerrung,charrier1988existence} with a \emph{volumetric} energy term of the form $f(\det F)$ for some convex function $f\col(0,\infty)\to\R$.

In this contribution, we consider the \emph{quasiconvex envelopes} of conformally invariant energies on $\GLp(2)$. Based on our previous results, we provide an explicit formula that allows for a direct computation of the quasiconvex (as well as the rank-one convex and polyconvex) envelope for this class of functions. We also discuss different ways of expressing conformally invariant energies, including representations based on the singular values of $F$, i.e.\ the eigenvalues of $\sqrt{F^TF}$, in order to highlight the difficulties which arise from focusing on the seemingly more simple representation in terms of the distortion function $\K$.

Our main result (Theorem~\ref{theorem:mainResult}) has been tested against a numerical algorithm for computing the polyconvex envelope \cite{bartels2005reliable} for a range of parameters, yielding agreement up to computational precision. In two special cases, we show that our results completely match previous developments of \citet{astala2008elliptic} and Yan \cite{yan2003baire,yan2001linear}. We also present direct finite element simulations of the microstructure using a trust-region--multigrid method~\cite{conn2000trustregion,sander2012simplicial} which show consistent results.
In the appendix, we answer two questions by \citet{adamowicz2007grotzsch}, and discuss a related relaxation result by \citet{dacorogna1993different}.

\subsection{Conformal and quasiconformal mappings}
Energy functions of the form \eqref{eq:definitionConformalInvariance} are intrinsically linked to conformal geometry and geometric function theory \cite{astala2008elliptic}. A mapping $\varphi\col\Omega\to\R^2$ is called \emph{conformal} if and only if $\grad\varphi(x)\in\CSO(2)$ on $\Omega$ or, equivalently,
\begin{align*}
	\grad\varphi^T\grad\varphi = (\det\grad\varphi)\cdot\id\,,%
\end{align*}
where $\id\in\GLp(2)$ denotes the identity matrix. If $\R^2$ is identified with the complex plane $\C$, then $\varphi$ is conformal if and only if $\varphi\col\Omega\subset\C\to\C$ is holomorphic and the derivative is non-zero everywhere.
Although the Riemann mapping theorem states that any non-empty, simply connected open planar domain can be mapped conformally to the unit disc, conformal mappings exhibit aspects of rigidity \cite{faraco2005geometric} that make them too restrictive for many interesting applications. In particular, since the Riemann mapping is uniquely determined by prescribing the function value for three points, conformal mappings are not able to satisfy arbitrary boundary conditions.

A significantly larger and more flexible class is given by the so-called \emph{quasiconformal} mappings, i.e.\ functions $\varphi\col\Omega\to\R^2$ that satisfy the uniform bound
\begin{equation}\label{eq:quasiConformalBound}
	\norm{\K}_\infty\colonequals\esssup_{x\in\Omega}\K(\grad\varphi(x))\leq L \qquad\text{for some }\;L\geq1\,,
\end{equation}
where $\K$ denotes the \emph{distortion function} \cite{iwaniec2009,astala2010deformations} or \emph{outer distortion} \cite{iwaniec2011invitation}
\begin{equation}\label{eq:distortionFunction}
	\K\col\GLp(2)\to\R\,,\quad \K(F) \colonequals \frac12\.\frac{\norm{F}^2}{\det F} = \frac{\sum_{i,j=1}^2 F_{ij}^2}{2\.\det F}\,.
\end{equation}
Due to Hadamard's inequality, $\K(F)\geq 1$ for all $F\in\GLp(2)$. In particular, if \eqref{eq:quasiConformalBound} is satisfied with $L=1$, then $\K(\grad\varphi)\equiv1$, which implies that $\varphi$ is conformal.

The classical \emph{Grötzsch free boundary value problem} \cite{grotzsch1928einige} (cf.\ Appendix~\ref{section:groetzsch}) is to find and characterize quasiconformal mappings of rectangles into rectangles that minimize the maximal distortion $\norm{\K}_\infty$ and map faces to corresponding faces, i.e.\ to solve the minimization problem
\begin{alignat}{2}
	\norm{\K(\grad\varphi)}_\infty \to \min\,,\qquad
	\varphi\col[0,a_1]\times[0,1]&\to[0,a_2]\times\mathrlap{[0,1]\,,}&&\notag\\
	\varphi([0,a_1]\times\{0\})&=[0,a_2]\times\{0\}\,,&
	\;\varphi([0,a_1]\times\{1\})&=[0,a_2]\times\{1\}\,,\\
	\varphi(\{0\}\times[0,1])&=\{0\}\times[0,1]\,,&
	\;\varphi(\{a_1\}\times[0,1])&=\{a_2\}\times[0,1]
	\,.\notag
\end{alignat}
A much more involved problem has been solved by Teichmüller \cite{teichmuller1944verschiebungssatz,alberge2015commentary}. The classical Teichmüller problem (cf.\ Appendix~\ref{section:groetzsch}) is to find and characterize quasiconformal solutions to
\begin{align}
	\norm{\K(\grad\varphi)}_\infty \to \min\,,\qquad \varphi\in W^{1,2}(\Omega;\R^2)\,,\quad\varphi(x)|_{\partial B_1(0)}=x\,,\quad\varphi(0)=-a\label{eq:teichmueller}
\end{align}
for $0<a<1$.
Computational approaches for calculating extremal quasiconformal mappings (with direct applications in engineering) are discussed, e.g., in \cite{weber2012computing}. However, the analytical difficulties posed by this problem also motivate the study of integral generalizations of \eqref{eq:teichmueller}, i.e.
\begin{align*}
	\int_{B_1(0)}\Psi(\K(\nabla\varphi))\,\dx \to \min\,,\qquad
	\varphi\in W^{1,2}(\Omega;\R^2)\,,\quad\varphi(x)|_{\partial B_1(0)}=x\,,\quad\varphi(0)=-a\,,
\end{align*}
where $\Psi\col[1,\infty)\to[0,\infty)$ is assumed to be strictly increasing.
Further generalizing the domain, boundary condition and additional constraints, we obtain a more classical problem in the calculus of variations: %
the existence and uniqueness of mappings between planar domains with prescribed boundary values that minimize certain integral functions of $\K$, i.e.\ the minimization problem
\begin{equation}\label{eq:conformalInfProblem}
	\int_\Omega \Psi(\K(\grad\varphi))\,\dx \;\to\min\,,\qquad \varphi\in W^{1,2}(\Omega;\R^2)\,,\quad \varphi\big|_{\partial\Omega} = \varphi_0\big|_{\partial\Omega}
\end{equation}
for given $\Psi\col[1,\infty)\to\R$ and $\varphi_0\col\Omega\to\R^2$. Since $\K(aR\.\grad\varphi)=\K(\grad\varphi\.aR)=\K(\grad\varphi)$ for all $a>0$ and all $R\in\SO(2)$, the distortion function $\K$ is conformally invariant, and indeed every conformally invariant energy $W$ on $\GLp(2)$ can be expressed in the form $W(F)=\Psi(\K(F))$, see \cite{agn_martin2015rank}.

However, the mapping $F\mapsto\K(F)$ is non-convex. Without additional restrictions on $\Psi$, it is therefore difficult to establish results regarding the existence or regularity of minimizers. It is generally believed \cite[Conjecture 21.2.1, p.\ 599]{astala2008elliptic} that for \enquote{well-behaved} functions $\Psi$, e.g.\ if $\Psi$ is smooth, strictly increasing and convex, any solution\footnote{%
	Note that the existence of minimizers follows from the polyconvexity \cite{Dacorogna08,charrier1988existence,ball1976convexity} of the mapping $F\to\Psi(\K(F))$.}
to the minimization problem \eqref{eq:conformalInfProblem} is a $C^{1,\alpha}$-diffeomorphism; this would contrast typical regularity results for more general problems in the calculus of variations (including nonlinear elasticity), where only partial regularity (e.g.\ $C^{1,\alpha}$ up to a set of measure zero) can be expected.

In this contribution, we are interested in cases where $\Psi$ is \emph{not} well behaved in the above sense; more specifically, we allow for some lack of convexity \emph{and} monotonicity of $\Psi$. Our results demonstrate that the common representation $W(F)=\Psi(\K(F))$ of an arbitrary conformally invariant function $W$ on $\GLp(2)$ is neither ideal nor \enquote{natural} as far as convexity properties of $W$ are concerned. Instead, by introducing the \emph{linear distortion} (or \emph{(large) dilatation} \cite{weber2012computing})
\[
	K(F) = \frac{\opnorm{F}^2}{\det F}=\frac{\lambdamax(\sqrt{F^TF})}{\lambdamin(\sqrt{F^TF})}=\K(F)+\sqrt{\K(F)^2-1}=e^{\arccosh\K(F)}\,,\qquad\text{i.e.}\quad
	\K=\frac{1}{2}\left(K+\frac{1}{K}\right)\,,%
\]
where $\opnorm{F}=\sup_{\norm{\xi}=1}\norm{F\.\xi}_{\R^2}$ denotes the operator norm (i.e.\ the largest singular value) of $F$, we can equivalently express any conformally invariant energy $W$ as $W(F)=h(K(F))$. This representation turns out to be much more convenient and suitable with respect to convexity properties of $W$.\footnote{On the other hand, since the distortion function $\K$ is differentiable on all of $\GLp(2)$ whereas the mapping $F\mapsto K(F)$ is not, it is advised to base numerical approaches to relaxation of conformally invariant energies on $\K$.}

In particular, our results (cf.\ Remark \ref{remark:iwaniecResult}) will allow us to easily generalize a consequence of a theorem by Astala, Iwaniec and Martin \cite[Theorem 21.1.3, p.\ 591]{astala2008elliptic}, stating that for $F_0\in\GLp(2)$ and $\Omega=B_1(0)$ and any strictly increasing $\Psi\col[1,\infty)\to[0,\infty)$ with sublinear growth,
\begin{equation}\label{eq:iwaniecResult}
	\inf \left\{ \int_{B_1(0)} \Psi(\K(\grad\varphi))\,\dx\,,\; \varphi\in W^{1,2}(B_1(0),\R^2)\,,\; \varphi\big|_{\partial B_1(0)}(x)=F_0\.x \right\} \;=\; \pi\cdot\Psi(1)\,.
\end{equation}
Note that the corresponding minimization problem has no solution unless $F_0\in\CSO(2)$. This result can be expressed as a specific \emph{relaxation} statement, namely that for these $W(F)=\Psi(\K(F))$, the quasiconvex envelope $QW$ of $W$ is given by $QW(F)\equiv\Psi(1)$. Quasiconvex envelopes arise naturally in the calculus of variations, representing the energetic response of a minimization problem without minimizers for linearly homogeneous boundary values under the presence of microstructure, i.e.\ \cite{Dacorogna08,vsilhavy2001rank,pedregal2000variational,silhavy1997mechanics}
\begin{equation}
	QW(F_0) = \inf \left\{ \frac{1}{\abs{\Omega}}\int_\Omega W(\grad\varphi)\,\dx\,,\; \varphi\in W^{1,p}(\Omega,\R^2)\,,\; \varphi\big|_{\partial\Omega}(x)=F_0\.x \right\}
\end{equation}
for any domain $\Omega\subset\R^2$ with Lebesgue measure $\abs{\Omega}$. If $QW(F_0)<W(F_0)$ for some $F_0\in\GLp(2)$, then the equilibrium state of the homogeneous deformation $\varphi(x)=F_0\.x$ is unstable: the material shows an energetic preference to develop finer and finer spatially modulated deformations (in engineering applications these are typically shear bands) at fixed averaged deformation $F_0\.x$. In this case, there are infimizing sequences with highly oscillating gradients which converge weakly (presuming appropriate coercivity conditions), but the weak limit is not a minimizer.

In this respect, our main result also answers a particular case of Iwaniec's Question~3 from \cite{iwaniec1999nonlinear}: \enquote{Which [functions of the] distortion [$\Psi(\K(\nabla\varphi))$] are weakly lower semi-continuous in $W^{1,2}(\Omega,\R^2)$?}

\section{Convexity properties of conformally invariant functions}

In (planar) nonlinear elasticity theory, three generalized convexity properties \cite{ball1976convexity,Dacorogna08} of an energy function $W\col\GLp(2)\to\R$ are of particular interest:
\emph{rank-one convexity}, i.e.\ for all $F\in\GLp(2)$, $t\in[0,1]$ and $H\in\R^{2\times2}$ with $\rank(H)=1$ and $\det(F+H)>0$,
\begin{equation*}
	W((1-t)F+t(F+H)) \;\leq\; (1-t)\.W(F) + t\.W(F+tH)\,;
\end{equation*}
\emph{quasiconvexity}, i.e.\ for every bounded open set $\Omega\subset\R^2$ and all $\vartheta\in C_0^\infty (\Omega)$ such that $\det(F_0+\nabla \vartheta)>0$,
\begin{equation*}
	\int_{\Omega}W(F_0+\nabla \vartheta)\,\dx\geq \int_{\Omega}W(F_0)\,\dx=W(F_0)\cdot \abs{\Omega}\,;
\end{equation*}
and \emph{polyconvexity} of $W$, i.e.
\begin{equation*}
	W(F) = P(F,\det F) \qquad\text{for some convex function }\;P\col\R^{2\times2}\times\R \;\cong\; \R^5\to\R\cup\{+\infty\}\,.
\end{equation*}
The importance of quasiconvexity stems from the fact that quasiconvexity of $W$ is essentially equivalent to the weak lower semi-continuity of $\int_\Omega W(\nabla\varphi(x))\.\dx$ \cite{morrey1952quasi}. The problem 
\[
	\int_\Omega W(\nabla\varphi(x))\,\dx \to \min\,,\qquad \varphi\in W^{1,2}(\Omega,\R^2)\,,\quad \varphi\big|_{\partial\Omega}(x)=F_0\.x
\]
has a solution if and only if $QW(F_0)=W(F_0)$; if $QW(F_0)<W(F_0)$, then infimizing sequences exhibit microstructure effects.

Under appropriate technical assumptions,
\begin{align}
	\text{convexity }\implies\text{ polyconvexity }\implies\text{ quasiconvexity }\implies\text{ rank-one convexity}\,,\label{eq:convexImplication}
\end{align}
whereas for dimension $n\geq3$, it is well known that the corresponding convexity properties are not equivalent; Sverak famously showed that rank-one convexity does not imply quasiconvexity with a counterexample consisting of a non-isotropic, non-objective polynomial of order four \cite{vsverak1992rank}. In the two-dimensional case discussed here, however, the question whether rank-one convexity is equivalent to quasiconvexity, known as the remaining part of \emph{Morrey's conjecture} \cite{morrey1952quasi}, is still unanswered \cite{morrey1952quasi,astala2012quasiconformal} and is considered one of the major open problems in the calculus of variations \cite{ball1987does,ball2002openProblems,neff2005critique}.

\medskip
\noindent In order to state criteria for the above convexity properties in the special case of conformally invariant functions on $\GLp(2)$, we consider a number of different representations available to express such functions.
\pagebreak
\begin{lemma}[\cite{agn_martin2015rank}]
\label{lemma:representations}
	Let $W\col\GLp(2)\to\R$ be conformally invariant. Then there exist uniquely determined functions $g\col(0,\infty)\times(0,\infty)\to\R$,\; $h\col(0,\infty)\to\R$ and $\Psi\col[1,\infty)\to\R$ such that
	\begin{equation}\label{eq:representationFormulae}
		W(F) = g(\lambda_1,\lambda_2)
		= h\left(\frac{\lambda_1}{\lambda_2}\right)
		= \Psi(\K(F))
	\end{equation}
	for all $F\in\GLp(2)$ with singular values $\lambda_1,\lambda_2$, where $\K(F)\colonequals\frac12\.\frac{\norm{F}^2}{\det F}$ and $\norm{\,.\,}$ denotes the Frobenius matrix norm with $\norm{F}^2=\sum_{i,j=1}^2 F_{ij}^2$. Furthermore,
	\begin{equation}\label{eq:representationRequirements}
		h(x)=h\left(\frac1x\right)\,,\quad g(x,y)=g(y,x) \quad\text{ and }\quad g(ax,ay)=g(x,y)
	\end{equation}
	for all $a,x,y\in(0,\infty)$.
\end{lemma}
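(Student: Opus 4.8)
The plan is to reduce everything to the action of $W$ on positive diagonal matrices via a singular value decomposition adapted to $\GLp(2)$, and then pass between the three parametrizations by elementary algebra. First I would record that every $F\in\GLp(2)$ can be written as $F=R_1\.\mathrm{diag}(\lambda_1,\lambda_2)\.R_2$ with $R_1,R_2\in\SO(2)$ and $\lambda_1,\lambda_2>0$ the singular values of $F$: starting from the usual decomposition $F=U\.\mathrm{diag}(\lambda_1,\lambda_2)\.V^T$ with orthogonal $U,V$, the condition $\det F>0$ forces $\det U=\det V$, and if these determinants equal $-1$ one replaces $U,V$ by $U\.\mathrm{diag}(1,-1)$ and $V\.\mathrm{diag}(1,-1)$, which leaves $F$ unchanged because $\mathrm{diag}(1,-1)$ commutes with $\mathrm{diag}(\lambda_1,\lambda_2)$. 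The left- and right-$\SO(2)$-invariance of $W$ in \eqref{eq:objectiveIsotropicIsochoric} then gives $W(F)=W(\mathrm{diag}(\lambda_1,\lambda_2))$, so $W$ is determined by its restriction to positive diagonal matrices, and I would \emph{define} $g(x,y)\colonequals W(\mathrm{diag}(x,y))$ for $x,y\in(0,\infty)$.

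The properties \eqref{eq:representationRequirements} of $g$ then follow directly. Scaling invariance $g(ax,ay)=W(a\.\mathrm{diag}(x,y))=W(\mathrm{diag}(x,y))=g(x,y)$ is immediate from $W(aF)=W(F)$. The symmetry $g(x,y)=g(y,x)$ follows from the identity $J\.\mathrm{diag}(x,y)\.J^T=\mathrm{diag}(y,x)$ for the $90^\circ$-rotation $J\in\SO(2)$, together with bi-$\SO(2)$-invariance; this is the one place where it is essential that only rotations, not reflections, are available, since the swap of the diagonal entries is realized by a \emph{proper} rotation. Next I would set $h(x)\colonequals g(x,1)$; scaling invariance gives $g(\lambda_1,\lambda_2)=g(\lambda_1/\lambda_2,1)=h(\lambda_1/\lambda_2)$ and hence $W(F)=h(\lambda_1/\lambda_2)$, while the symmetry of $g$ yields $h(x)=g(x,1)=g(1,x)=g(1/x,1)=h(1/x)$.

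For the distortion representation, write $t=\lambda_1/\lambda_2$ and compute $\K(F)=\frac{\lambda_1^2+\lambda_2^2}{2\.\lambda_1\lambda_2}=\frac12\bigl(t+\tfrac1t\bigr)$. The map $t\mapsto\frac12\bigl(t+\tfrac1t\bigr)$ restricts to a strictly increasing bijection of $[1,\infty)$ onto $[1,\infty)$ with inverse $\kappa\mapsto\kappa+\sqrt{\kappa^2-1}$, so I would define $\Psi(\kappa)\colonequals h\bigl(\kappa+\sqrt{\kappa^2-1}\bigr)$ on $[1,\infty)$; using $h(t)=h(1/t)$ to replace $t$ by whichever of $\lambda_1/\lambda_2,\lambda_2/\lambda_1$ is $\geq1$ then gives $\Psi(\K(F))=h(\lambda_1/\lambda_2)=W(F)$. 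Finally, uniqueness is immediate: any admissible $g$ must equal $W(\mathrm{diag}(x,y))$ because the positive diagonal matrix $\mathrm{diag}(x,y)$ realizes $\{x,y\}$ as its singular values, so $g$, and likewise $h(x)=W(\mathrm{diag}(x,1))$ and $\Psi(\kappa)=W(\mathrm{diag}(\kappa+\sqrt{\kappa^2-1},1))$, are all forced, the last of these because every $\kappa\geq1$ is attained as $\K(F)$ for a suitable $F$. I do not expect a serious obstacle; the only real care needed is the bookkeeping of the ordering ambiguity of the singular values (absorbed into the symmetry of $g$ and the relation $h(x)=h(1/x)$) and the observation that $\det F>0$ is exactly what permits the singular value decomposition to be taken with both orthogonal factors in $\SO(2)$.
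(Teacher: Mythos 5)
Your proof is correct and complete: the reduction to diagonal matrices via an $\SO(2)$--$\SO(2)$ singular value decomposition (using $\det F>0$ and the commutation with $\diag(1,-1)$), the realization of the diagonal swap by a proper rotation, and the change of variables $\K=\frac12(t+\frac1t)$ with inverse $t=\K+\sqrt{\K^2-1}$ are exactly the standard route. The paper itself does not reprove the lemma but quotes it from \cite{agn_martin2015rank}, where the argument proceeds along essentially these same lines, so there is nothing to object to.
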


Conversely, if the requirements \eqref{eq:representationRequirements} are satisfied for otherwise arbitrary functions $g\col(0,\infty)\times(0,\infty)\to\R$,\; $h\col(0,\infty)\to\R$ or $\Psi\col[1,\infty)\to\R$, then \eqref{eq:representationFormulae} defines a conformally invariant function $W$.

Note that $h$ is already uniquely determined by its values on $[1,\infty)$ and recall that $\K\geq1$, with $\K(\nabla\varphi)=1$ if and only if $\varphi$ is conformal.

The following proposition summarizes the main results from \cite{agn_martin2015rank} and completely characterizes the generalized convexity of conformally invariant functions on $\GLp(2)$.
\begin{proposition}[\cite{agn_martin2015rank}]
\label{proposition:convexityCharacterization}
	Let $W\col\GLp(2)\to\R$ be conformally invariant,
	and let $g\col(0,\infty)\times(0,\infty)\to\R$,\; $h\col(0,\infty)\to\R$ and $\Psi\col[1,\infty)\to\R$ denote the uniquely determined functions with
	\[
		W(F) = g(\lambda_1,\lambda_2)
		= h\left(\frac{\lambda_1}{\lambda_2}\right)
		= \Psi(\K(F))
	\]
	for all $F\in\GLp(2)$ with singular values $\lambda_1,\lambda_2$, where $\K(F)=\frac12\frac{\norm{F}^2}{\det F}$. Then the following are equivalent:

	\setlength\columnsep{-10.5em}
	\begin{multicols}{2}
		\begin{itemize}
			\item[i)] $W$ is polyconvex,
			\item[ii)] $W$ is quasiconvex,
			\item[iii)] $W$ is rank-one convex,
			\item[iv)] $g$ is separately convex,
			\item[v)] $h$ is convex on $(0,\infty)$,
			\item[vi)] $h$ is convex and non-decreasing on $[1,\infty)$.
		\end{itemize}
	\end{multicols}
	
	\noindent
	Furthermore, if $\Psi$ is twice continuously differentiable, then i)--vi) are equivalent to
	\begin{itemize}
		\item[vii)] $(x^2-1)\,(x+\sqrt{x^2-1})\,\Psi^{\prime\prime}(x) + \Psi^{\prime}(x)\geq 0$\quad for all\; $x\in(1,\infty)$.
	\end{itemize}
\end{proposition}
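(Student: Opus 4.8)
The plan is to close the cycle i) $\Rightarrow$ ii) $\Rightarrow$ iii) $\Rightarrow$ v) $\Rightarrow$ vi) $\Rightarrow$ i), to note v) $\Leftrightarrow$ iv) on the side, and then to treat vii) separately. The implications i) $\Rightarrow$ ii) $\Rightarrow$ iii) are just the general chain \eqref{eq:convexImplication} (in the extended-valued reading, with $W\colonequals+\infty$ off $\GLp(2)$), and use nothing about conformal invariance. For iii) $\Rightarrow$ v) I would test rank-one convexity along the affine segments $s\mapsto F_0+sH$ with $F_0=\operatorname{diag}(a,b)$, $H=\operatorname{diag}(1,0)$, $a,b>0$: along these $\det(F_0+sH)=(a+s)\.b>0$ for $s\geq0$, the singular values are $a+s$ and $b$, hence $W(F_0+sH)=h\bigl(\tfrac{a+s}{b}\bigr)$; applying the chord inequality of rank-one convexity to subsegments makes $s\mapsto h\bigl(\tfrac{a+s}{b}\bigr)$ convex on $[0,\infty)$, so $h$ is convex on $[a/b,\infty)$, and letting $a/b\to0$ yields convexity of $h$ on $(0,\infty)$, i.e.\ v). Here v) $\Leftrightarrow$ iv) is immediate, since $x\mapsto g(x,y)=h(x/y)$ is $h$ precomposed with an affine bijection of $(0,\infty)$ and, by $g(x,y)=g(y,x)$, separate convexity of $g$ reduces to convexity in a single slot. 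And v) $\Leftrightarrow$ vi) is elementary one-variable convexity together with $h(x)=h(1/x)$: a convex $h$ with this symmetry has $x=1$ as a minimizer (write $1$ as a convex combination of any $x>1$ and $1/x$, on both of which $h$ agrees), hence is non-decreasing on $[1,\infty)$; conversely, a convex non-decreasing $h$ on $[1,\infty)$, reflected through $x\mapsto1/x$, is on $(0,1]$ a composition of a convex non-decreasing map with a convex one, and the one-sided slopes at $x=1$ fit together in the order required for convexity on all of $(0,\infty)$.

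The decisive step is vi) $\Rightarrow$ i), and the key is to base polyconvexity on the \emph{linear} distortion $K(F)=\opnorm{F}^2/\det F$ (which equals $\lambda_1/\lambda_2$ and is $\geq1$ on $\GLp(2)$) rather than on $\K$. Writing $W(F)=h(K(F))$, one observes that $X\mapsto\opnorm{X}^2$ is convex, non-negative and positively $2$-homogeneous, so its perspective $(X,d)\mapsto d\,\opnorm{X/d}^2=\opnorm{X}^2/d$ is jointly convex on $\R^{2\times2}\times(0,\infty)$, and therefore so is $(X,d)\mapsto\max\{1,\opnorm{X}^2/d\}$. Composing this with $h|_{[1,\infty)}$, which is convex and \emph{non-decreasing} by vi), yields a convex function on $\R^{2\times2}\times(0,\infty)$; extending it by $+\infty$ for $d\leq0$ gives a convex $P\col\R^{2\times2}\times\R\to\R\cup\{+\infty\}$ with $P(F,\det F)=h(\max\{1,K(F)\})=h(K(F))=W(F)$ for all $F\in\GLp(2)$, i.e.\ $W$ is polyconvex. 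It is instructive to record why the naive route through $\K$ fails: the function $\Psi$ of $\K$ need not be convex even when $h$ is, precisely because $\K\mapsto\K+\sqrt{\K^2-1}=K$ is concave, so convexity in $K$ does not pass to convexity in $\K$.

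For vii), assume $\Psi\in C^2$, so that $h(t)=\Psi\bigl(\tfrac12(t+\tfrac1t)\bigr)$ is $C^2$ on $(0,\infty)$. With $x=\tfrac12(t+\tfrac1t)$ one has $2tx=t^2+1$ and $x^2-1=\tfrac{(t^2-1)^2}{4t^2}$, i.e.\ $t^2-1=2t\sqrt{x^2-1}$ for $t\geq1$, and a direct differentiation gives
\[
	h''(t)=\frac{1}{t^{3}}\Bigl[(x^2-1)\bigl(x+\sqrt{x^2-1}\bigr)\,\Psi''(x)+\Psi'(x)\Bigr]\,.
\]
Since $x$ sweeps $[1,\infty)$ as $t$ does and $t^3>0$, condition vii) is exactly $h''\geq0$ on $[1,\infty)$, i.e.\ convexity of $h$ there; combined with the automatic identity $h'(1)=0$ (the map $t\mapsto\tfrac12(t+\tfrac1t)$ has vanishing derivative at $t=1$) this is equivalent to vi), hence to i)--vi).

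The main obstacle is vi) $\Rightarrow$ i): one has to recognize that polyconvexity is naturally captured by the linear distortion $K$ --- whose ``operator-norm-squared over determinant'' shape makes it a perspective function, hence polyconvex --- and that the truncation $\max\{1,\,\cdot\,\}$ must be inserted so that the values of $\opnorm{X}^2/d$ away from the locus $\{(F,\det F)\}$ land where $h$ is monotone, which is precisely what keeps the composition convex. Everything else reduces to the standard implications \eqref{eq:convexImplication} or to elementary one-variable convexity; full details of all steps are carried out in \cite{agn_martin2015rank}.
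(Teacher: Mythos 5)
Your proposal is essentially correct, and it is worth noting that the paper itself offers no proof to compare against: Proposition~\ref{proposition:convexityCharacterization} is imported verbatim from \cite{agn_martin2015rank}, so what you have written is a self-contained argument where the paper only cites. The cycle you set up is sound. The implications i)$\Rightarrow$ii)$\Rightarrow$iii) are indeed the general chain \eqref{eq:convexImplication} (with the technical care for extended-valued functions on $\GLp(2)$ that the paper itself alludes to via \cite{aubert1995necessary}); iii)$\Rightarrow$v) via the rank-one line $\diag(a+s,b)$ is correct, since $s\mapsto(a+s)/b$ is affine and exhausts $(0,\infty)$; iv)$\Leftrightarrow$v) correctly exploits the symmetry $g(x,y)=g(y,x)$ so that only the affine slot needs checking; and v)$\Leftrightarrow$vi) is the standard reflection argument, including the gluing of one-sided slopes at $x=1$. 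The decisive step vi)$\Rightarrow$i) is the right idea and is carried out correctly: $(X,d)\mapsto\opnorm{X}^2/d$ is jointly convex on $\R^{2\times2}\times(0,\infty)$ (quadratic-over-linear, or a supremum of such), the truncation $\max\{1,\cdot\}$ keeps the inner function in the domain $[1,\infty)$ where $h$ is convex and non-decreasing, and extending by $+\infty$ for $d\leq0$ preserves convexity, so $P(F,\det F)=h(K(F))=W(F)$ exhibits polyconvexity; this is precisely the reason the linear distortion $K$, not $\K$, is the natural variable, in line with the paper's own discussion. Your computation for vii) also checks out: with $x=\frac12(t+\frac1t)$ one gets $h''(t)=t^{-3}\bigl[(x^2-1)(x+\sqrt{x^2-1})\Psi''(x)+\Psi'(x)\bigr]$ for $t\geq1$, and $h'(1)=0$ automatically, so vii) is equivalent to vi) for $C^2$ functions $\Psi$. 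The only caveat is one of rigor rather than substance: at sketch level you defer the extended-valued versions of the implications in \eqref{eq:convexImplication} to the literature, exactly as the paper does.
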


Note that in terms of the representation function $h$, the convexity criteria can be expressed in a remarkably simple way, especially when compared to vii), i.e.\ the representation in terms of the classical distortion $\K$. In particular, while monotonicity and convexity of $\Psi$ are sufficient for the considered properties,\footnote{Recall that the mapping $F\mapsto\K(F)$ itself is polyconvex \cite{Dacorogna08,agn_hartmann2003polyconvexity} on $\GLp(2)$.} convexity of the energy with respect to $\K$ is not a necessary condition; for example, if $W\col\GLp(2)\to\R$ is given by
\begin{equation*}
	W(F)=\frac{\max\{\lambda_1,\lambda_2\}}{\min\{\lambda_1,\lambda_2\}} = \max\left\{\frac{\lambda_1}{\lambda_2},\frac{\lambda_2}{\lambda_1}\right\} = \K(F)+\sqrt{\K(F)^2-1} = e^{\arccosh(\K(F))}\,,
\end{equation*}
then $W$ is polyconvex due to the convexity of $x\mapsto h(x)=\max\{x,\frac1x\}$ on $(0,\infty)$, whereas the representing function $\Psi\col[1,\infty)\to\R$ with $\Psi(x)=x+\sqrt{x^2-1}$ is monotone increasing but not convex.

\begin{example}
	Consider the isochoric, conformally invariant St.\,Venant--Kirchhoff-type energy function
	\begin{equation}\label{eq:SVK}
		W\col\GLp(2)\to\R\,,\quad W(F)=\Bignorm{\frac{F^TF}{\det F}-\id}^2
		= \Big(\frac{\lambda_1}{\lambda_2}-1\Big)^2 + \Big(\frac{\lambda_2}{\lambda_1}-1\Big)^2
		= 4(\K(F)^2-\K(F))\,,
	\end{equation}
	where $\id$ denotes the identity matrix.
	This energy $W$ can be expressed in the form \eqref{eq:representationFormulae} with
	\[
		g(x,y) = \Big(\frac{x}{y}-1\Big)^2 + \Big(\frac{y}{x}-1\Big)^2\,,
		\qquad
		h(x) = (x-1)^2+\Big(\frac{1}{x}-1\Big)^2\,,
		\qquad
		\Psi(x) = 4(x^2-x)\,.
	\]
	Since $h\col(0,\infty)\to\R$ is convex, the planar isochoric St.\,Venant--Kirchhoff energy is quasiconvex according to Proposition \ref{proposition:convexityCharacterization}, while, e.g.\ the non-conformally-invariant term $\norm{F^TF-\id}^2=(\lambda_1-1)^2+(\lambda_2-1)^2$ is not, cf.\ Appendix~\ref{appendix:convexity}.
\end{example}

\section{Generalized convexity properties and convex envelopes}
\label{section:convexityProtperties}
For each of the convexity properties listed in the previous section, we can consider the corresponding \emph{envelope} of an arbitrary energy function $W\col\GLp(2)\to\R$, i.e.
\begin{align*}
	RW(F) &= \sup \{ w(F) \setvert w\col\GLp(2)\to\R \,\text{ rank-one convex, }& \hspace{-0.6cm}w(X)&\leq W(X) \,\text{ for all }X\in\GLp(2) \}\,,\\
	QW(F) &= \sup \{ w(F) \setvert w\col\GLp(2)\to\R \,\text{ quasiconvex, }& \hspace{-0.6cm}w(X)&\leq W(X) \,\text{ for all }X\in\GLp(2) \}\,,\\
	PW(F) &= \sup \{ w(F) \setvert w\col\GLp(2)\to\R \,\text{ polyconvex, }& \hspace{-0.6cm}w(X)&\leq W(X) \,\text{ for all }X\in\GLp(2) \}\,,\\
	CW(F) &= \sup \{ w(F) \setvert w\col\GLp(2)\to\R \,\text{ convex, }& \hspace{-0.6cm}w(X)&\leq W(X) \,\text{ for all }X\in\GLp(2) \}\,.
\end{align*}

Since polyconvexity implies quasiconvexity, which in turn implies rank-one convexity (cf.\ \cite[Theorem 3.3]{aubert1995necessary} for the case of isotropic functions on $\GLp(n)$), it is easy to see that $CW(F)\leq PW(F)\leq QW(F)\leq RW(F)$.\footnote{Examples of functions where $PW<QW$ were examined, for example, by Gangbo \cite{gangbo1993continuity}.} However, while a number of numerical methods are available to approximate the rank-one convex envelope $RW$ \cite{dolzmann2004variational,bartels2004linear,oberman2017partial} as well as the polyconvex envelope $PW$ \cite{dolzmann1999numerical,kruzik1998numerical,bartels2005reliable,aranda2001computation}, it is difficult to analytically compute $RW$, $PW$ or the quasiconvex envelope $QW$ of a given energy $W$ in general, although explicit representations have been found for a number of particular functions, including the St.\,Venant--Kirchhoff energy~\cite{ledret1995quasiconvex} and several challenging problems encountered in engineering applications \cite{cesana2011quasiconvex,albin2009infinite}. Further examples can be found in \cite[Chapter 6]{Dacorogna08}.

More general methods for computing the quasiconvex envelope are often based on the observation that $RW=PW$ and thus $RW=QW$ for certain classes of energy functions $W$. In many such cases, even the equality $RW=CW$ holds \cite{dacorogna1993different,raoult2010quasiconvex}, i.e.\ the generalized convex envelopes are all identical to the classical convex envelope of $W$, cf.\ Appendix~\ref{appendix:convexity}.

\citet{yan1997rank} showed that non-constant rank-one convex conformal energy functions (cf.\ Footnote~\ref{footnote:conformalEnergy} for the distinction between conformally invariant and conformal energy functions) defined on all of $\R^{n\times n}$ for $n\geq3$ must grow at least with power $\frac{n}{2}$, which implies that the quasiconvex envelope of a conformal energy $W$ on $\R^{3\times3}$ must be constant if $W$ exhibits sublinear growth.\footnote{This result is essentially sharp: Müller, {\v{S}}ver{\'a}k and Yan \cite[Theorem 1.2]{muller1999sharp} have shown that there exists a nontrivial quasiconvex conformal energy function $W\col\R^{2\times 2}\to\R$ with a constant $c^+>0$ such that for all $F\in\R^{2\times 2}$,
\[
	0\leq W(F)\leq c^+\.(1+\norm{F}) \qquad\text{and}\qquad W(F)=0 \;\iff\; F\in\CSO(2)\,.
\]}
The results given in the following show that an analogous property holds for conformally invariant energies on $\GLp(2)$.

In order to apply Proposition \ref{proposition:convexityCharacterization} to the computation of generalized convex envelopes, the following invariance property of the rank-one convex envelope will be required.
\begin{lemma}
\label{lemma:envelopeInvariance}
	If $W\col\GLp(n)\to\R$ is conformally invariant, then $RW$ is conformally invariant.
\end{lemma}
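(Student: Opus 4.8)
The plan is to exploit the fact that each of the two invariances making up conformal invariance --- left/right $\SO(n)$-invariance and scaling invariance --- is generated by composition with affine maps whose gradients lie in $\CSO(n)$, and that composition with such fixed linear maps permutes the class of rank-one convex functions lying below $W$. Concretely, fix $A, B \in \CSO(n)$ and consider the transformation $T_{A,B}\col w \mapsto w(A\,\cdot\,B)$ acting on functions $\GLp(n)\to\R$. First I would check that $T_{A,B}$ maps rank-one convex functions to rank-one convex functions: if $w$ is rank-one convex and $H$ has rank one, then $A\,H\,B$ also has rank one (since $A,B$ are invertible), and along the segment $F_0 + tH$ the composition $F \mapsto w(AFB)$ restricts to $t \mapsto w(A F_0 B + t\,A H B)$, which is convex on the relevant interval by rank-one convexity of $w$; one must also track the determinant constraint $\det(F_0+H)>0$, but $\det(A(F_0+H)B) = \det A \det B \det(F_0+H)$ has the same sign since $A,B\in\GLp$, so the admissible set is preserved.

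Next I would use this to show $T_{A,B}$ preserves the \emph{defining property} of the envelope. If $w$ is rank-one convex with $w \leq W$ pointwise on $\GLp(n)$, then $T_{A,B}w = w(A\,\cdot\,B)$ is rank-one convex by the previous step, and $T_{A,B}w(F) = w(AFB) \leq W(AFB) = W(F)$ for all $F$, using conformal invariance of $W$ in the last equality. Hence $T_{A,B}w$ is again an admissible competitor in the supremum defining $RW$. Taking the supremum over all such $w$ --- and noting $T_{A,B}$ is a bijection on the admissible class (its inverse is $T_{A^{-1},B^{-1}}$, and $A^{-1},B^{-1}\in\CSO(n)$) --- gives $RW(AFB) = \sup_w w(AFB) = \sup_w (T_{A,B}w)(F) = RW(F)$, which is exactly conformal invariance of $RW$. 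I would phrase this cleanly as: $RW$ equals the pointwise supremum of $\{T_{A,B}w : w \text{ admissible}\} = \{v : v \text{ admissible}\}$, so the value at $F$ is unchanged under replacing $F$ by $AFB$.

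There is no serious obstacle here; the only point requiring a little care is making sure the rank-one convexity inequality as stated in the paper (with its $\det > 0$ side condition and the slightly asymmetric form $W((1-t)F + t(F+H)) \leq (1-t)W(F) + t\,W(F+tH)$) is genuinely stable under $T_{A,B}$. For this I would note that the substitution $F \rightsquigarrow AFB$, $H \rightsquigarrow AHB$ is a bijection between admissible pairs $(F,H,t)$ for $w(A\,\cdot\,B)$ and for $w$, and it carries the convex combination $(1-t)F + t(F+tH)$ to $(1-t)(AFB) + t(AFB + t\,AHB)$ by linearity, so the inequality transports verbatim. The statement is for $\GLp(n)$, matching the generality of the lemma; the planar case needed later is the special case $n=2$. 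One should also remark that the supremum defining $RW$ is over a nonempty family (e.g. any constant below $\inf W$ works, if $W$ is bounded below; in the conformally invariant setting relevant here this will hold on the sets where it matters) so $RW$ is well-defined and finite where used --- but since the lemma is purely about the invariance of whatever function $RW$ is, even an extended-real-valued interpretation makes the argument go through unchanged.
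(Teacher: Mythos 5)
Your argument is correct, but it takes a genuinely different route from the paper's. You work directly with the definition of $RW$ as the pointwise supremum of rank-one convex functions lying below $W$, and show that for fixed $A,B\in\CSO(n)$ the substitution $w\mapsto w(A\,\cdot\,B)$ is a bijection of this admissible class onto itself: $H\mapsto AHB$ preserves rank one and the sign of the determinant, so rank-one convexity (with its $\det>0$ side condition) transports verbatim, and $w(AFB)\leq W(AFB)=W(F)$ by the assumed invariance of $W$; taking suprema and using that the inverse substitution comes from $A^{-1},B^{-1}\in\CSO(n)$ gives $RW(AFB)=RW(F)$, i.e.\ the full conformal invariance in one stroke. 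The paper instead splits the claim: it cites the literature for the preservation of left- and right-$\SO(2)$-invariance and only proves the scaling invariance $RW(aF)=RW(F)$, doing so via the iterative characterization $RW=\lim_{k\to\infty}R_kW$ and an induction over the lamination levels $R_kW$. Your approach is more self-contained --- it needs neither the cited invariance results nor the $R_kW$-characterization (whose applicability on the non-convex domain $\GLp(n)$ is a point one would otherwise want to check) and treats the whole group $\CSO(n)$ uniformly --- while the paper's proof is shorter on the page because it delegates the orthogonal invariance to references. Your closing remarks about the determinant constraint, the asymmetric form of the rank-one convexity inequality, and the possibly extended-real-valued envelope are exactly the right caveats, and none of them affects the validity of the argument.
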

\begin{proof}
	It is well known that the left- and right-$\SO(2)$-invariance is preserved by the rank-one convex envelope \cite{buttazzo1994envelopes,dacorogna1993different,ledret1994remarks}, so due to the characterization \eqref{eq:objectiveIsotropicIsochoric} of conformal invariance it remains to show that $RW(aF)=RW(F)$ for all $a>0$ and all $F\in\GLp(2)$.
	
	We use the characterization $RW(F)=\lim_{k\to\infty}R_kW(F)$ of the rank-one convex envelope \cite[p.~202]{Dacorogna08}, where $R_0W(F)=W(F)$ and
	\[
		R_{k+1}W(F) \colonequals \inf \Big\{ t\.R_kW(F_1)+(1-t)\.R_kW(F_2) \,\setvert\, t\in[0,1],\, t\.F_1+(1-t)\.F_2=F,\, \rank(F_1-F_2)=1 \Big\}\,,
	\]
	and show by induction that $R_kW(aF)=R_kW(F)$ for all $k\geq0$. First, we find $R_0W(aF)=W(aF)=W(F)=R_0W(F)$, so assume that $R_kW(F) = R_kW(aF)$ for some $k\geq1$. For any $\eps>0$, choose $F_1,F_2\in\GLp(2)$ and $t\in[0,1]$ with $tF_1+(1-t)F_2=F$ and $\rank(F_1-F_2)=1$ such that $tR_kW(F_1)+(1-t)R_kW(F_2)\leq R_{k+1}W(F)+\eps$. Then, since $t\.aF_1+(1-t)\.aF_2=aF$ and $\rank(aF_1-aF_2)=1$,
	\begin{align*}
		R_{k+1}W(aF) \leq t\.R_kW(aF_1)+(1-t)\.R_kW(aF_2) = t\.R_kW(F_1)+(1-t)\.R_kW(F_2) \leq R_{k+1}W(F)+\eps\,,
	\end{align*}
	thus $R_{k+1}W(aF)\leq R_{k+1}W(F)$. Analogously, we find $R_{k+1}W(F)\leq R_{k+1}W(aF)$ and thereby $RW(aF)=\lim_{k\to\infty}R_kW(aF)=\lim_{k\to\infty}R_kW(F)=RW(F)$.
\end{proof}
\begin{remark}
	By direct computation, it is easy to see that $QW$ is conformally invariant if $W\col\GLp(n)\to\R$ is conformally invariant. Similar to the rank-one convex envelope it is well known \cite{buttazzo1994envelopes,dacorogna1993different,ledret1994remarks} that the left- and right-$\SO(2)$-invariance is preserved by $QW$, and the equality
	\begin{align*}
		QW(a\.F)&=\inf_{\vartheta\in\C_0^\infty(\Omega)} \frac{1}{\abs{\Omega}} \int_\Omega W(a\.F+\nabla\vartheta)\,\dx = \inf_{\vartheta\in\C_0^\infty(\Omega)} \frac{1}{\abs{\Omega}} \int_\Omega W\Big(a\Big(F+\frac{1}{a}\.\nabla\vartheta\Big)\Big)\,\dx\\
		&= \inf_{\vartheta\in\C_0^\infty(\Omega)} \frac{1}{\abs{\Omega}} \int_\Omega W\Big(F+\frac{1}{a}\.\nabla\vartheta\Big)\,\dx = \inf_{\widetilde\vartheta\in\C_0^\infty(\Omega)} \frac{1}{\abs{\Omega}} \int_\Omega W(F+\nabla\widetilde\vartheta)\,\dx=QW(F)\,,
\end{align*}
	can be obtained by utilizing the scaling invariance $W(a\.F)=W(F)$ for all $a>0$ and $F\in\GLp(n)$.
\end{remark}
\subsection{Main result on the quasiconvex envelope}
We can now state our main result.
\begin{theorem}
\label{theorem:mainResult}
	Let $W\col\GLp(2)\to\R$ be conformally invariant, and let $h\col[1,\infty)\to\R$ denote the function  uniquely determined by
	\begin{equation}\label{eq:hDefinition}
		W(F) = h\left(\frac{\lambda_1}{\lambda_2}\right)
	\end{equation}
	for all $F\in\GLp(2)$ with ordered singular values $\lambda_1\geq\lambda_2$. Then
	\begin{equation}\label{eq:mainResult}
		RW(F)=QW(F)=PW(F)=\Cm h\left(\frac{\lambda_1}{\lambda_2}\right) \qquad\text{for all }\;F\in\GLp(2)\,,
	\end{equation}
	where $\Cm h\col[1,\infty)\to\R$ denotes the \emph{monotone-convex envelope} given by
	\[
		\Cm h(t) \colonequals \sup\Big\{p(t) \setvert p\col[1,\infty)\to\R \text{ monotone increasing and convex with } p(s)\leq h(s)\;\forall\,s\in[1,\infty)\Big\}\,.
	\]
\end{theorem}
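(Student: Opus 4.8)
The plan is to squeeze the candidate
\[
	V(F) \colonequals \Cm h\Big(\frac{\lambda_1}{\lambda_2}\Big)\,,\qquad \lambda_1\geq\lambda_2\,,
\]
between the three envelopes and itself. First I would note that $V$ depends on $F$ only through the ratio $\frac{\lambda_1}{\lambda_2}$ of its ordered singular values, which is a conformally invariant quantity; hence $V$ is conformally invariant, and by the uniqueness statement in Lemma~\ref{lemma:representations} its representing function on $[1,\infty)$ is precisely $\Cm h$. Since $\Cm h$ is, by construction, monotone increasing and convex on $[1,\infty)$, Proposition~\ref{proposition:convexityCharacterization} (the implication vi)$\,\Rightarrow\,$i)) shows that $V$ is polyconvex, hence also quasiconvex and rank-one convex; here one also invokes the equivalence v)$\,\Leftrightarrow\,$vi) of that proposition, which is exactly the statement that the reflection-symmetric extension of a monotone convex function on $[1,\infty)$ is genuinely convex on all of $(0,\infty)$, so that $V$ is admissible in the definitions of $PW$, $QW$, $RW$. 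Finally $\Cm h\leq h$ on $[1,\infty)$ by definition of the monotone-convex envelope, so $V\leq W$ on $\GLp(2)$.

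The first (easy) half of the argument is then immediate: $V$ is a polyconvex minorant of $W$ and $PW$ is the supremum of all such minorants, so $V\leq PW$; together with the general chain $PW\leq QW\leq RW$ this gives
\[
	V \;\leq\; PW \;\leq\; QW \;\leq\; RW \qquad\text{on }\;\GLp(2)\,.
\]
The heart of the matter is the reverse inequality $RW\leq V$. Here I would use two facts: $RW$ is rank-one convex (being the supremum of rank-one convex functions dominated by the real-valued $W$, the relevant family of minorants being nonempty whenever $W$ is bounded below, which I address below), and $RW$ is conformally invariant by Lemma~\ref{lemma:envelopeInvariance}. Consequently Lemma~\ref{lemma:representations} provides a unique $\widetilde h\col[1,\infty)\to\R$ with $RW(F)=\widetilde h(\frac{\lambda_1}{\lambda_2})$, and Proposition~\ref{proposition:convexityCharacterization} (now iii)$\,\Rightarrow\,$vi)) forces $\widetilde h$ to be monotone increasing and convex on $[1,\infty)$. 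Since $RW\leq W$ we also have $\widetilde h\leq h$ there, so $\widetilde h$ is an admissible competitor in the supremum defining $\Cm h$; hence $\widetilde h\leq\Cm h$, i.e.\ $RW\leq V$. Combining the two chains yields $RW=QW=PW=V=\Cm h(\frac{\lambda_1}{\lambda_2})$, as claimed.

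The step I expect to be the main obstacle is this last one, and specifically the legitimacy of applying the convexity characterization to $RW$ itself, which requires $RW$ to be rank-one convex rather than merely a supremum of rank-one convex functions. This is classical but presupposes that $W$ possesses at least one rank-one convex minorant; for conformally invariant $W$ this holds as soon as $h$ (equivalently $W$) is bounded below, a suitable constant then serving as minorant, whereas if $h$ is unbounded below then $\Cm h$ fails to be proper and all the envelopes are identically $-\infty$, so the identity holds with the convention $\sup\emptyset=-\infty$. One should therefore either assume $W$ bounded below throughout or dispose of the degenerate case at the outset; in the nondegenerate case the argument above is complete once the cited facts from Lemma~\ref{lemma:representations}, Lemma~\ref{lemma:envelopeInvariance} and Proposition~\ref{proposition:convexityCharacterization} are in hand.
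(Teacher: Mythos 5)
Your proposal is correct and follows essentially the same route as the paper's own proof: show that $F\mapsto \Cm h(\lambda_1/\lambda_2)$ is a polyconvex minorant of $W$ (via Proposition~\ref{proposition:convexityCharacterization}), and then use Lemma~\ref{lemma:envelopeInvariance} together with Lemma~\ref{lemma:representations} and the same proposition to conclude that the representing function $\widetilde h$ of the rank-one convex envelope $RW$ is a monotone convex minorant of $h$, hence $RW\leq \Cm h(\lambda_1/\lambda_2)$. Your additional caveat about $W$ (equivalently $h$) being bounded below is a point the paper only addresses in a remark after the theorem, and does not change the substance of the argument.
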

\begin{proof}
\newcommand{\mocoen}{w}
	Let $\mocoen(F)\colonequals \Cm h\left(\frac{\lambda_1}{\lambda_2}\right)$. Due to the convexity and monotonicity of $\Cm h$ and Proposition~\ref{proposition:convexityCharacterization}, the mapping $\mocoen\col\GLp(2)\to\R$ is polyconvex. Therefore, since
	\[
		\mocoen(F) = \Cm h\left(\frac{\lambda_1}{\lambda_2}\right) \leq h\left(\frac{\lambda_1}{\lambda_2}\right) = W(F)\,,
	\]
	we find $\mocoen(F)\leq PW(F)$ for all $F\in\GLp(2)$. Since $PW(F)\leq QW(F)\leq RW(F)$, it only remains to show that $RW(F)\leq\mocoen(F)$ in order to establish \eqref{eq:mainResult}.
	
	According to Lemma \ref{lemma:envelopeInvariance}, $RW$ is conformally invariant, thus according to Lemma \ref{lemma:representations} there exists a uniquely determined $\htilde\col[1,\infty)\to\R$ such that $RW(F)=\htilde\left(\frac{\lambda_1}{\lambda_2}\right)$ for all $F\in\GLp(2)$ with singular values $\lambda_1\geq\lambda_2$. Due to the rank-one convexity of $RW$ and Proposition \ref{proposition:convexityCharacterization}, the function $\htilde$ is convex and non-decreasing. Since
	\[
		\htilde(t) = RW(\diag(t,1)) \leq W(\diag(1,t)) = h(t)
	\]
	as well, we find $\htilde(t)\leq \Cm h(t)$ for all $t\in[1,\infty)$ and thus
	\[
		RW(F) = \htilde\left(\frac{\lambda_1}{\lambda_2}\right) \leq \Cm h\left(\frac{\lambda_1}{\lambda_2}\right) =\mocoen(F)
	\]
	for all $F\in\GLp(2)$.
\end{proof}

\begin{remark}
	If $h$ is monotone increasing, then $\Cm h=Ch$, i.e.\ the monotone-convex envelope (which is the largest convex non-decreasing function not exceeding $h$ \cite[Prop.\,4.1]{vsilhavy2001rank}) is identical to the (classical) convex envelope $Ch$ of $h$ on $[1,\infty)$. Furthermore, if $h$ is continuous, then computing the monotone-convex envelope $\Cm h$ can easily be reduced to the simple one-dimensional problem of finding the convex envelope $C\htilde$ of the function
	\[
		\htilde\col[1,\infty)\to\R\,,\quad \htilde(t)=
		\begin{cases}
			{\displaystyle \min_{s\in[1,\infty)}h(s)} &:\; t\leq \min\argmin h\,,\\
			\hfill h(t) &:\; \text{otherwise,}
		\end{cases}
	\]
	where $\min\argmin h = \min\{s\in[1,\infty)\setvert h(s)=\min h\}$, cf.\ Figure \ref{fig:monotoneConvexEnvelopeExample}. In particular, if $h$ attains its minimum at $1$, then $\htilde=h$ and thus $\Cm h = Ch$. Note that if $h$ is not bounded below, then $\Cm h$ is not well defined (and neither is $RW$).
\end{remark}
\begin{remark}\label{remark:iwaniecResult}
	If $\Psi\col[1,\infty)\to\R$ is strictly monotone with sublinear growth, then both these properties hold for the function $h\col[1,\infty)\to\R$ with $\Psi(\K(F))=h(\frac{\lambda_1}{\lambda_2})\equalscolon W(F)$ as well, which implies
	\[
		QW = \Cm h = Ch \equiv h(1) = \Psi(1)\,.
	\]
	For this special case, we directly recover the earlier result \eqref{eq:iwaniecResult} originally due to \citet{astala2008elliptic}.
\end{remark}
\begin{remark}
The monotone-convex envelope of $h\col[1,\infty)\to\R$ can also be obtained by \enquote{reflecting} the graph of the function at $x=1$ and taking the classical convex envelope: if $\hhat\col\R\to\R$ denotes the extension of $h$ to $\R$ defined by
	\[
		\hhat(x)\colonequals\begin{cases}
		\hfill h(x) &:\; x>1\,,
		\\ h(1-x) &:\; x\leq 1\,,
		\end{cases}
	\]
	then $\Cm h = C \hhat |_{\R_{\geq 1}}$, cf.\ Figure~\ref{fig:monotoneConvexEnvelopeExample} and Appendix \ref{appendix:Dacorogna}.
\end{remark}
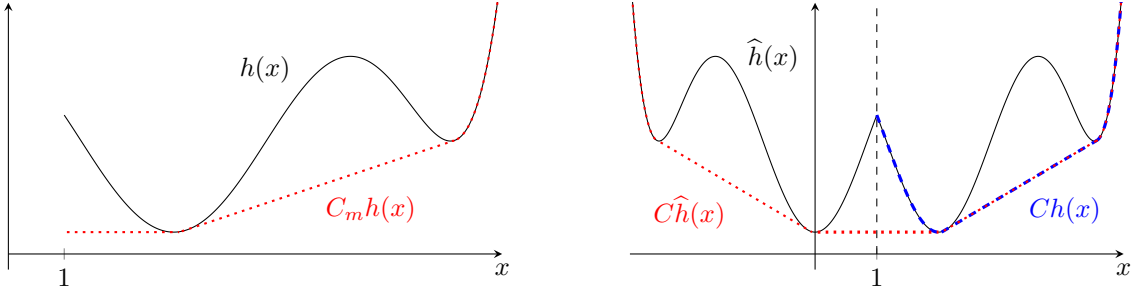
\begin{figure}[h!]
	\centering
	\def\yoffset{0.14}
   \begin{minipage}[b]{.49\linewidth}
      \centering
      \begin{tikzpicture}
		\begin{axis}[
        axis x line=middle,axis y line=middle,
        x label style={at={(current axis.right of origin)},anchor=north, below},
        xlabel=$x$, ylabel={},
        xmin=0.49, xmax=5,
        ymin=-0.1, ymax=1.61,
        width=1\linewidth,
        height=\graphRatio\linewidth,
        ytick=\empty,
        xtick={1}
        ]
        \addplot[black, smooth][domain=1:5, samples=\sample]{(x-2)^2-0.27*(x-2)^4+0.02*(x-2)^6+\yoffset} node[pos=.49, above left] {$h(x)$};
        \addplot[red, dash pattern=on 1pt off 2pt,shorten <=1,thick] coordinates {(1.0003,\yoffset) (2.05, \yoffset)};
        \addplot[red, dash pattern=on 1pt off 2pt,shorten <=1,thick] coordinates {(2.05,\yoffset) (4.55, {0.584+\yoffset})} node[midway, below right]{$\Cm h(x)$};
        \addplot[red, dash pattern=on 1pt off 2pt,shorten <=1,thick][domain=4.54:5, samples=\sample]{(x-2)^2-0.27*(x-2)^4+0.02*(x-2)^6+\yoffset};
        \end{axis}
	 	\end{tikzpicture}
    \end{minipage}
    \begin{minipage}[b]{.49\linewidth}
      \centering
      \begin{tikzpicture}
		\begin{axis}[
        axis x line=middle,axis y line=middle,
        x label style={at={(current axis.right of origin)},anchor=north, below},
        xlabel=$x$, ylabel={},
        xmin=-3, xmax=5,
        ymin=-0.1, ymax=1.61,
        width=1\linewidth,
        height=\graphRatio\linewidth,
        ytick=\empty,
        xtick={1}
        ]
        \addplot[black, dashed]coordinates{(1,0) (1,1.61)};
        \addplot[black, smooth][domain=1:5, samples=\sample]{(x-2)^2-0.27*(x-2)^4+0.02*(x-2)^6+\yoffset};
        \addplot[black, smooth][domain=-3:1, samples=\sample]{(x+0)^2-0.27*(x+0)^4+0.02*(x+0)^6+\yoffset} node[pos=.49, above right] {$\hhat(x)$};
        
        \addplot[blue, dash pattern=on 3pt off 3pt,very thick][domain=1:2, samples=\sample]{(x-2)^2-0.27*(x-2)^4+0.02*(x-2)^6+\yoffset};
        \addplot[blue, dash pattern=on 3pt off 3pt,very thick] coordinates {(2.05,\yoffset) (4.55, {0.584+\yoffset})} node[midway, below right]{$Ch(x)$};
        \addplot[blue, dash pattern=on 3pt off 3pt,very thick][domain=4.54:5, samples=\sample]{(x-2)^2-0.27*(x-2)^4+0.02*(x-2)^6+\yoffset};
        
        \addplot[red, dash pattern=on 1pt off 2pt,shorten <=1,very thick]coordinates{(-0,\yoffset) (2,\yoffset)};
        \addplot[red, dash pattern=on 1pt off 2pt,shorten <=1,thick] coordinates {(2.05,\yoffset) (4.55, {0.584+\yoffset})};
        \addplot[red, dash pattern=on 1pt off 2pt,shorten <=1,thick][domain=4.54:5, samples=\sample]{(x-2)^2-0.27*(x-2)^4+0.02*(x-2)^6+\yoffset};
        
        \addplot[red, dash pattern=on 1pt off 2pt,shorten <=1,thick][domain=-3:-2.54, samples=\sample]{(x+0)^2-0.27*(x+0)^4+0.02*(x+0)^6+\yoffset};        
        \addplot[red, dash pattern=on 1pt off 2pt,shorten <=1,thick] coordinates { (-2.54, {0.584+\yoffset})(-0.05,\yoffset)} node[midway, below left]{$C \hhat(x)$};

        \end{axis}
     	\end{tikzpicture}
	 \end{minipage}
  \caption{Left: Example of a monotone-convex envelope. Right: The monotone-convex envelope $\Cm h$ of $h\col[1,\infty)\to\R$ can be obtained by restricting the convex envelope $C\hhat$ of a suitably extension $\hhat\col\R\to\R$ of $h$ back to $[1,\infty)$.\label{fig:monotoneConvexEnvelopeExample}}
\end{figure}

\section{Specific relaxation examples and numerical simulations}
\label{section:applications}
Theorem \ref{theorem:mainResult} can be used to explicitly compute the quasiconvex envelope for a substantial class of functions. In the following, a number of explicit relaxation examples will be considered and some of our analytical results will be compared to numerical simulations.

\subsection{The deviatoric Hencky energy}
First, consider the (planar) \emph{deviatoric Hencky strain energy} \cite{Hencky1929,agn_neff2015geometry} $\WdH\col\GLp(2)\to\R$,
\begin{align*}
	\WdH(F) &= 2\.\norm{\dev_2\log U}^2 = 2\.\norm{\dev_2 \log\sqrt{F^TF}}=\left[\log\left(\frac{\norm{F}^2}{2\.\det F}+\sqrt{\frac{\norm{F}^4}{4\.(\det F)^2}-1}\right)\right]^2\\
	&=\left[\log\left(\K(F)+\sqrt{\K(F)^2-1}\right)\right]^2=\arccosh^2(\K(F))\,,
\end{align*}
where $\dev_n X \colonequals X-\frac{1}{n}\,\tr(X)\cdot\id$ is the deviatoric (trace-free) part of $X\in\Rnn$ and $\log U$ denotes the principal matrix logarithm of the right stretch tensor $U\colonequals \sqrt{F^T F}$. The energy $\WdH$ can be expressed as%
\[
	\WdH(F) = \log^2\left(\frac{\lambda_1}{\lambda_2}\right)\,.%
\]
Since the representing function $h\col[1,\infty)\to\R$ with $h(t)=\log^2(t)$ is monotone, we find
\[
	\Cm h(t) = Ch(t) = 0 \qquad\text{for all }\; t\in[1,\infty)
\]
and thus
\begin{align*}
	R\WdH = Q\WdH = P\WdH \equiv 0\,.
\end{align*}
Note that due to the sublinear growth of $h$ (or, equivalently, of the representation $\K\mapsto\arccosh^2(\K)$), this result can also be obtained by eq.\ \eqref{eq:iwaniecResult}, cf.\ Remark \ref{remark:iwaniecResult}.

\label{sectionContains:geodesicDistance}
Interestingly, the deviatoric Hencky strain energy itself is directly related to the conformal group $\CSO(n)$: Let $\dg(\cdot,\cdot)$ denote the \emph{geodesic distance} on the Lie group $\GLp(n)$ with respect to the canonical left-invariant Riemannian metric \cite{agn_martin2014minimal,Mielke2002}. Then the distance of $F\in\GLpn$ to the special orthogonal group $\SOn\subset\GLpn$ is given by \cite[Theorem 3.3]{agn_neff2015geometry}
\begin{equation}\label{eq:geodesicsMain}
	\dg^2(F,\SO(n)) = \min_{\Rtilde\in\SO(n)}\dg^2(F,\Rtilde) = \norm{\log U}^2\,.
\end{equation}
The deviatoric Hencky strain energy can therefore be characterized by the equality
\begin{align*}
	\dg^2(F,\CSO(n)) &= \min_{A\in\CSO(n)}\dg^2(F,A)= \min_{\substack{\Rtilde\in\SO(n)\\a\in(0,\infty)}}\dg^2(F,a\.\Rtilde)
	\;\overset{(*)}{=} \min_{a\in(0,\infty)\vphantom{\Rtilde}} \, \min_{\Rtilde\in\SO(n)}\dg^2\left(\frac{F}{a},\,\Rtilde\right)\notag\\
	\;&\overset{\mathclap{\eqref{eq:geodesicsMain}}}{=} \min_{a\in(0,\infty)} \, \Bignorm{\log\frac{U}{a}}^2
	= \min_{a\in(0,\infty)} \, \norm{(\log U) - \log(a)\.\id}^2
	= \norm{\dev_n\log U}^2\,,
\end{align*}
where $(*)$ holds due to the left-invariance of the metric.
\subsection{The squared logarithm of \texorpdfstring{$\K$}{K}}
Similarly, consider
\[
	\Wlog(F)
	=
	(\log\K)^2
	=
	\log^2\Big(\frac{1}{2}\Big(\frac{\lambda_1}{\lambda_2}+\frac{\lambda_2}{\lambda_1}\Big)\Big),
	\qquad\text{i.e.}\quad
	h(t)
	=
	\log^2\Big(\frac{1}{2}\Big(t+\frac{1}{t}\Big)\Big).
\]
Since $h$ is again monotone on $[1,\infty)$, we find
\[
	\Cm h(t) = Ch(t) = 0 \qquad\text{for all }\; t\in[1,\infty)
\]
and thus
\[
	R\Wlog = Q\Wlog = PW \equiv 0\,.
\]
\begin{figure}[h!]
  \centering
    \begin{minipage}[t]{.49\linewidth}
      \centering
      \begin{tikzpicture}
		\begin{axis}[
        axis x line=middle,axis y line=middle,
        x label style={at={(current axis.right of origin)},anchor=north, below},
        xlabel=$t$, ylabel=$h(t)$,
        xmin=-1, xmax=10,
        ymin=-0.4, ymax=4,
        width=1\linewidth,
        height=\graphRatio\linewidth,
        ytick=\empty,
        xtick={1},
        xticklabels={1}
        ]
        \addplot[black, smooth][domain=1:10, samples=\sample]{ln(x)^2};
        \end{axis}
     	\end{tikzpicture}
	 \end{minipage}
  	 \begin{minipage}[t]{.49\linewidth}
      \centering
      \begin{tikzpicture}
		\begin{axis}[
        axis x line=middle,axis y line=middle,
        x label style={at={(current axis.right of origin)},anchor=north, below},
        xlabel=$t$, ylabel=$h(t)$,
        xmin=-1, xmax=10,
        ymin=-0.4, ymax=4,
        width=1\linewidth,
        height=\graphRatio\linewidth,
        ytick=\empty,
        xtick={1},
        xticklabels={1}
        ]
        \addplot[black, smooth][domain=1:10, samples=\sample]{ln(0.5*(x+1/x))^2};
        \end{axis}
     	\end{tikzpicture}
	 \end{minipage}
  \caption{Left: Visualization of $\WdH(F) = \log^2\big(\frac{\lambda_1}{\lambda_2}\big)$ with $h(t)=\log^2(t)$. Right: Visualization of $\Wlog(F)=(\log\K)^2$ with $h(t)=\log^2\left(\frac{1}{2}\left(t+\frac{1}{t}\right)\right)$.}
\end{figure}
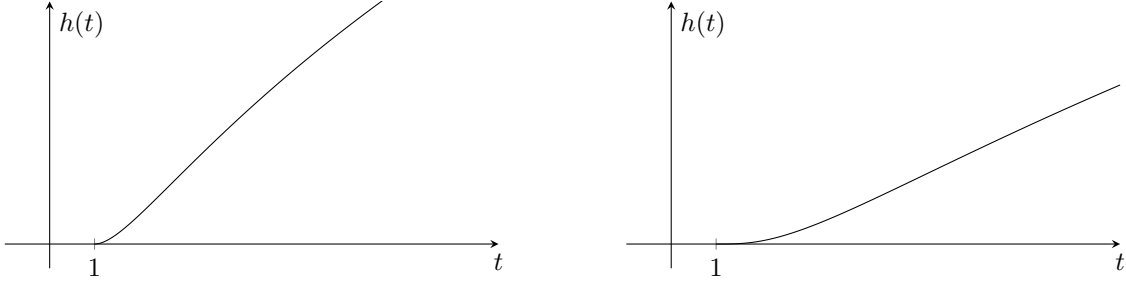
\subsection{The exponentiated Hencky energy}
Now, consider the \emph{exponentiated deviatoric Hencky energy} \cite{agn_neff2015exponentiatedI}
\[
	\WeH\col\GLp(2)\to\R\,,\quad \WeH(F) = e^{2k\norm{\dev_2\log U}^2}
\]
for some parameter $k>0$. It has previously been shown \cite{agn_neff2015exponentiatedII,agn_ghiba2015exponentiated,agn_martin2018non} that $\WeH$ is polyconvex (and thus quasiconvex) for $k\geq\frac18$. For any $0<k<\frac18$, we can explicitly compute the quasiconvex envelope: since
\[
	\WeH(F) = e^{k\log^2\big(\frac{\lambda_1}{\lambda_2}\big)}\,,
\]
and since the mapping $t\mapsto h(t)=e^{k\log^2(t)}$ is monotone increasing on $[1,\infty)$, we find
\[
	R\WeH(F) = Q\WeH(F) = P\WeH(F) = Ch\Big(\frac{\lambda_1}{\lambda_2}\Big)
\]
for all $F\in\GLp(2)$ with singular values $\lambda_1,\lambda_2$.

\medskip\noindent
In order to further investigate the behavior of this quasiconvex relaxation with finite element simulations, we choose the particular value $k = 0.11 < \frac{1}{8}$ and consider the quasiconvex envelope $QW(F)$ of
\begin{align*}
	W(F)&=h\left(\frac{\lambda_1}{\lambda_2}\right)=e^{0.11\left(\log\frac{\lambda_1}{\lambda_2}\right)^2}
	=e^{0.11\left[\arccosh\K(F)\right]^2}\,.
\end{align*}
Using Maxwell's equal area rule \cite[p.~319]{silhavy1997mechanics}, we numerically compute the monotone-convex envelope of $h$ up to five decimal digits:
\begin{align*}
	\Cm h(t)=Ch(t)=\begin{cases}
		\hfill h(t) &:\; \hphantom{2.65363}\mathllap{1}\leq t\leq 2.65363\,, \\
		0.872034+0.0898464\,t &:\; 2.65363<t<35.4998\,,\\
		\hfill h(t) &:\; 35.4998\leq t \,.
	\end{cases}
\end{align*}
This explicit representation allows us to determine the set of all $F\in\GLp(2)$ with $QW(F)<W(F)$, known as the \emph{binodal region} \cite{grabovsky2018rank,grabovsky2016legendre}. In particular, the microstructure energy gap (cf.~Figure~\ref{fig:energyGap}) between $h$ and $Ch$ is maximal at $\frac{\lambda_1}{\lambda_2}\approx 12.0186\equalscolon x_0$ with a value of $\Delta\approx0.0221558$. We therefore choose homogeneous Dirichlet boundary conditions given by
\begin{equation}
\label{eq:numeric_boundary_conditions}
	F_0=\begin{pmatrix}
		\sqrt{x_0} & 0\\ 0 & \frac{1}{\sqrt{x_0}}
	\end{pmatrix}=\begin{pmatrix}
		\sqrt{12.0186} & 0\\ 0 & \frac{1}{\sqrt{12.0186}}
	\end{pmatrix}\,,%
\end{equation}
such that $\det F_0=1$, for the finite element simulation. The energy level of the homogeneous solution is
\begin{align*}
	I(\varphi_0)=\int_{B_1(0)} W(F_0)\,\dx=\pi\.W(F_0)\approx 6.20155\,,
\end{align*}
whereas the infimum of the energy levels of the microstructure solutions is
\begin{align*}
	\inf I(\varphi)=\inf\left\{\int_{B_1(0)} W(\nabla\varphi)\,\dx\,,\quad\varphi|_{\partial B_1(0)}(x) =F_0\.x\right\}=\pi\.(W(F_0)-\Delta)\approx 6.13194\,.
\end{align*}

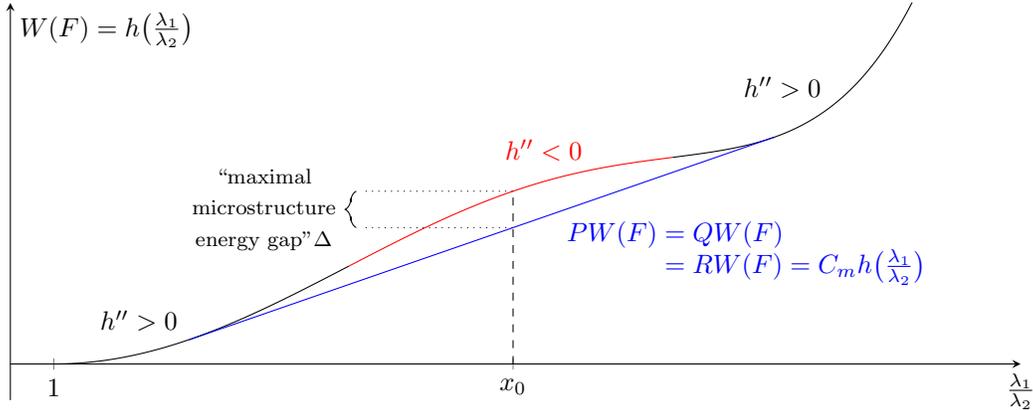
\begin{figure}[h!]
	\centering
  	\begin{tikzpicture}
		\begin{axis}[
		axis x line=middle,axis y line=middle,
		x label style={at={(current axis.right of origin)},anchor=north, below},
		xlabel=$\frac{\lambda_1}{\lambda_2}$, ylabel={$W(F)=h\bigl(\frac{\lambda_1}{\lambda_2}\bigr)$},
		xmin=.91, xmax=3,
		ymin=-0.1, ymax=1,
		width=.903\linewidth,
		height=.413\linewidth,
		ytick=\empty,
		xtick={1, 1.95},
		xticklabels={1, $x_0$}
		]
			\addplot[black, smooth][domain=1:1.61, samples=\sample]{0.9*(x-1)^2-0.5*(x-1)^4+0.1*(x-1)^6} node[pos=.42, above left] {$h''>0$};
			\addplot[red, smooth][domain=1.61:2.28, samples=\sample]{0.9*(x-1)^2-0.5*(x-1)^4+0.1*(x-1)^6} node[pos=.77, above left] {$h''<0$};
			\addplot[black, smooth][domain=2.28:3, samples=\sample]{0.9*(x-1)^2-0.5*(x-1)^4+0.1*(x-1)^6} node[pos=.21, above left] {$h''>0$};
			\addplot[blue, smooth] coordinates {(1.28, 0.067) (2.49, 0.628)} node[pos=.63, below right, align=left] {$PW(F)=QW(F)$\\$\hphantom{PW(F)}=RW(F)=\Cm h\bigl(\frac{\lambda_1}{\lambda_2}\bigr)$};
			\addplot[black,dashed] coordinates {(1.95, 0) (1.95, 0.4785)};
			\addplot[black,dotted] coordinates {(1.645, 0.4785) (1.95, 0.4785)} node[pos=0] (braceUpper) {};
			\addplot[black,dotted] coordinates {(1.645, 0.377636) (1.95, 0.377636)} node[pos=0] (braceLower) {};
		\end{axis}
		\draw[decorate,decoration={brace,amplitude=4.2pt,mirror}] (braceUpper.west) -- (braceLower.west) node [black,midway,left=4.2pt,align=center] {\footnotesize \textooquote maximal\\\footnotesize microstructure\\\footnotesize energy gap\textcoquote $\Delta$};
	\end{tikzpicture}
  \caption{\label{fig:energyGap}%
	  Visualization of the maximal microstructure energy gap $\Delta$ between $h$ and $\Cm h$ for an energy $W$ which is not convex with respect to $K(F)=\frac{\lambda_1}{\lambda_2}$, similar to the case $\WeH(F) = e^{k\.\log^2(\frac{\lambda_1}{\lambda_2})}$ for $k<\frac{1}{8}$.}
\end{figure}

Figure~\ref{fig:numerical_microstructure} shows two numerical simulations of the microstructure on triangle grids with different resolutions.  The illustration shows the reference configuration, colored according to the value of the determinant of the deformation gradient (plotting $\K$ instead results in similar images).  The energy level of the configuration on the left is $6.17149$ on a grid with $294\,912$ vertices. Repeating the computation on a grid with one additional step of uniform refinement leads to the configuration on the right, which has an energy level of $6.16216$.

Note that the values obtained for the energy level still differ significantly from the expected value of $6.13194$. It is unclear whether the discrepancy is solely due to insufficient mesh resolution; further numerical investigations on more performant hardware are planned for the future. The expected energy level was, however, obtained numerically using a modification of an algorithm by Bartels \cite{bartels2005reliable} for computing the polyconvex envelope.
\begin{figure}[h!]
 \begin{center}
  \includegraphics[width=0.45\textwidth]{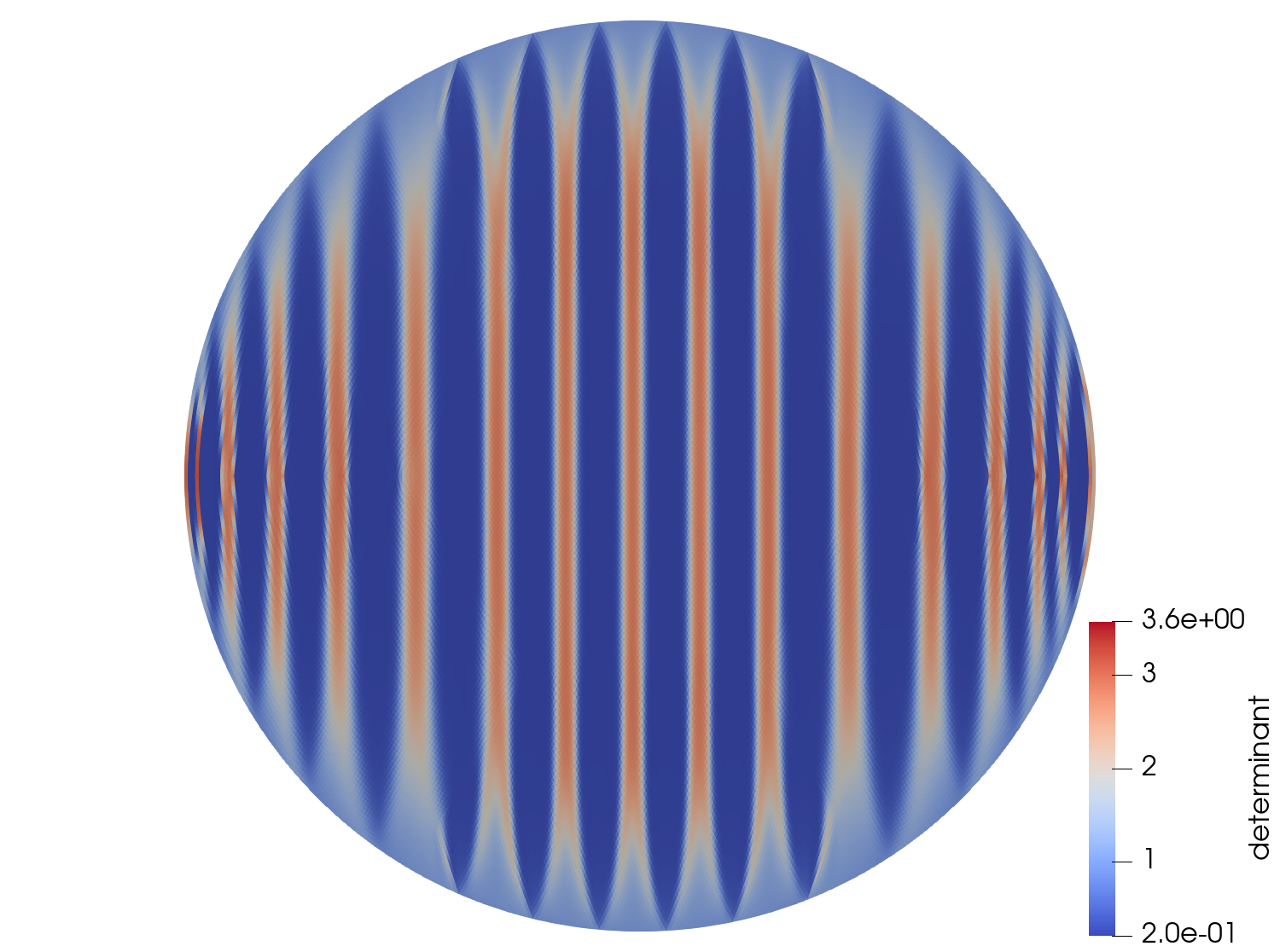}
  \includegraphics[width=0.45\textwidth]{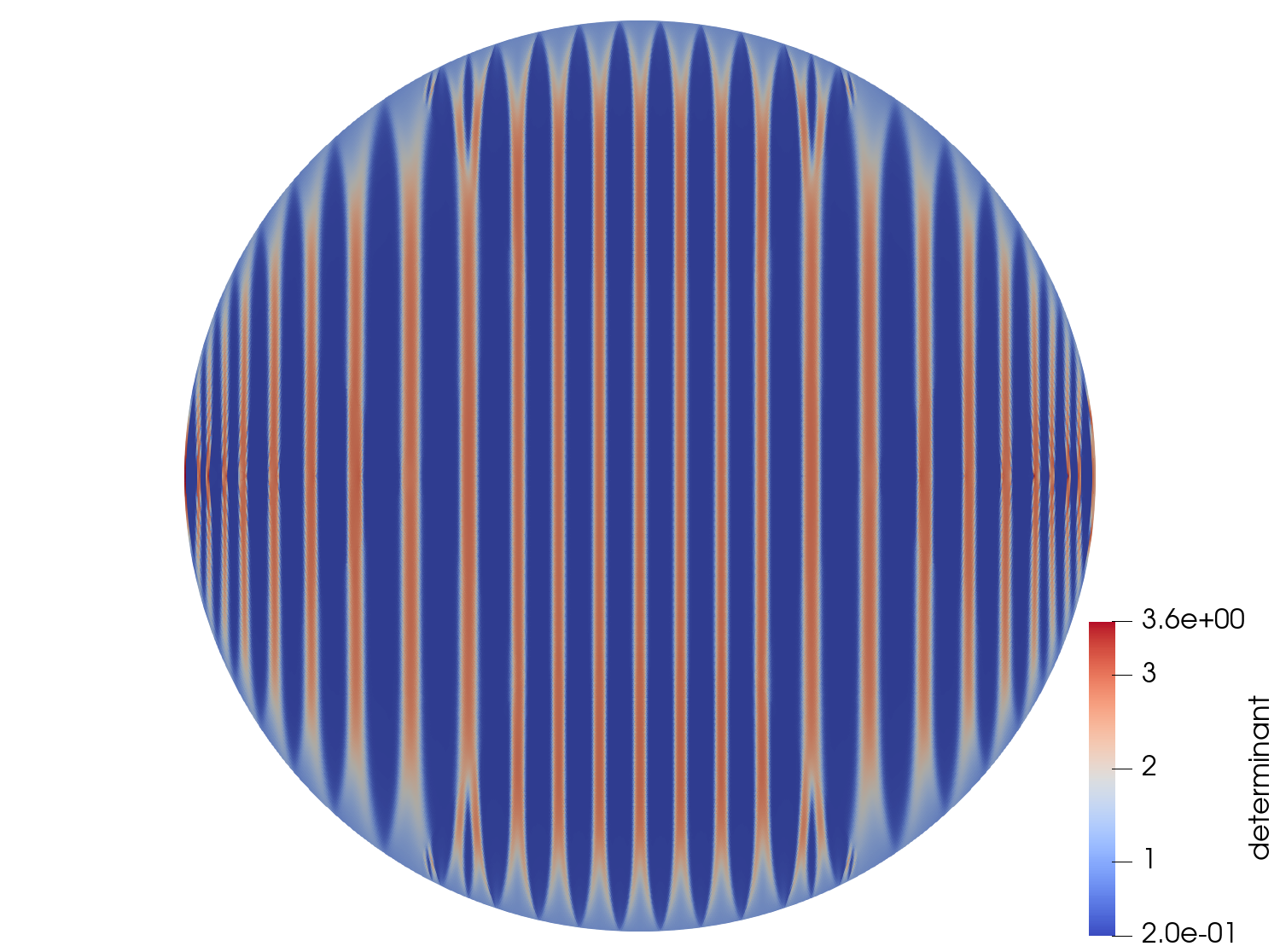}
 \end{center}
 \caption{Microstructure for the energy $W(F) = e^{0.11\left[\arccosh\K(F)\right]^2}$ with boundary conditions $F_0$ given by~\eqref{eq:numeric_boundary_conditions} for two different mesh resolutions. Although the number of oscillations (laminates) is mesh-dependent, macroscopic quantities like volume ratios are mesh-independent; these macroscopic features are predicted by $QW$. Left: $294\,912$ grid vertices, energy level of $6.17149$. Right: $1\,179\,648$ vertices, energy level of $6.16216$.}
 \label{fig:numerical_microstructure}
\end{figure}
\subsection{An energy function related to a result by Yan}
Lastly, we consider the energy function
\begin{align*}
	W(F)=\Psi_L(\K(F))=\cosh(\K(F)-L)-1=\cosh\left(\frac{1}{2}\left(\frac{\lambda_1}{\lambda_2}+\frac{\lambda_2}{\lambda_1}\right)-L\right)-1\,,
\end{align*}
which penalizes the deviation of the distortion $\K$ from a prescribed value $L\geq1$. According to Theorem~\ref{theorem:mainResult}, the quasiconvex envelope of $W$ is given by
\begin{equation}
	QW(F) = \begin{cases}
		\hfill 0 &:\; 1\leq\K(F) \leq L\,, \\
		W(F) &:\; L\leq\K(F)\,.
	\end{cases}\label{eq:exampleEnvelope}
\end{equation}
\begin{figure}[h!]
  \centering
  \tikzRemake
  \tikzsetnextfilename{hNotConvex}
    \begin{minipage}[t]{.49\linewidth}
      \centering
      \begin{tikzpicture}
		\begin{axis}[
        axis x line=middle,axis y line=middle,
        x label style={at={(current axis.right of origin)},anchor=north, below},
        xlabel=$\K$, ylabel=$\Psi_L(\K)$,
        xmin=-0.25, xmax=2.5,
        ymin=-0.2, ymax=2,
        width=1\linewidth,
        height=\graphRatio\linewidth,
        ytick=\empty,
        xtick={1,1.5},
        xticklabels={1,$L$}
        ]
        \addplot[black, smooth][domain=0.5:2.5, samples=\sample]{cosh(2*(x-1.5))-1};
        \end{axis}
     	\end{tikzpicture}
	 \end{minipage}
  	 \begin{minipage}[t]{.49\linewidth}
      \centering
      \begin{tikzpicture}
		\begin{axis}[
        axis x line=middle,axis y line=middle,
        x label style={at={(current axis.right of origin)},anchor=north, below},
        xlabel=$K$, ylabel=$h_l(K)$,
        xmin=0.21, xmax=4,
        ymin=-0.1, ymax=1,
        width=1\linewidth,
        height=\graphRatio\linewidth,
        ytick=\empty,
        xtick={1,2.618},
        xticklabels={1,$l$}
        ]
        \addplot[black, smooth][domain=0.25:4, samples=\sample*2]{cosh(2*(0.5*(x+1/x)-1.5))-1};
        \addplot[blue, dashed,thick][domain=2.618:4, samples=\sample]{cosh(2*(0.5*(x+1/x)-1.5))-1};
        \addplot[blue, dashed,thick] coordinates {(1,0.005) (2.618, 0.005)};
        \end{axis}
     	\end{tikzpicture}
	 \end{minipage}
  \caption{Visualization of $\Psi_L(\K)$, the corresponding representation $h_l(K)=\Psi_L\left(\frac{1}{2}\left(K+\frac{1}{K}\right)\right)$ and the monotone-convex envelope of the restriction of $h_l$ to $[1,\infty)$.}
\end{figure}
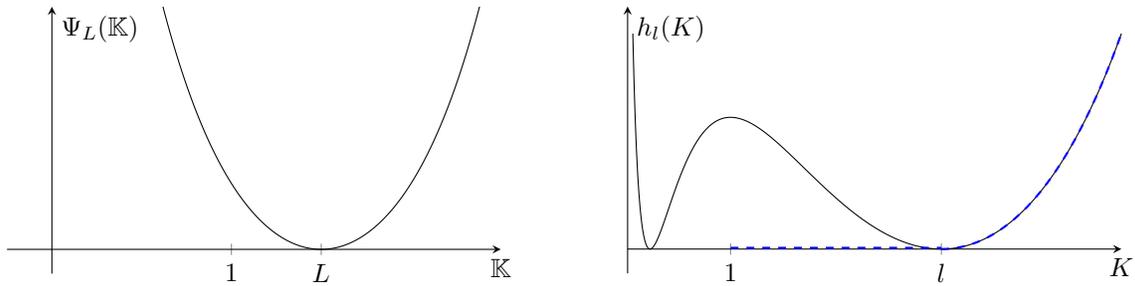

Again, we want to further investigate the microstructure induced by $W$ with numerical simulations on $\Omega=B_1(0)$.
For our calculations, we consider the case $L=2$. At $x_0=\frac{\lambda_1}{\lambda_2}=1$, the microstructure energy gap between between $h$ and $Ch$ is maximal with a value of $\Delta\approx 0.54308$, hence we use homogeneous Dirichlet boundary values with $F_0=\id$. The energy value of the homogeneous solution is
\begin{align*}
	I(\varphi_0)=\int_{B_1(0)} W(F_0)\,\dx=\pi\.W(F_0)\approx 1.70614,
\end{align*}
whereas the energy level of the microstructure solution should, in the limit, approach
\begin{align*}
	\inf I(\varphi)=\inf\left\{\int_{B_1(0)} W(\nabla\varphi)\,\dx\,,\quad\varphi|_{\partial B_1(0)}(x) = F_0x \right\}=\pi\.(W(F_0)-\Delta)=0\,.
\end{align*}

We again compute the microstructure using finite element simulations.
The microstructure exhibited by this example significantly differs from the previous one; in particular, no simple laminar structure can be observed at all (Figure~\ref{fig:numerical_microstructure_cosh_det}). As expected, we obtain deformations with $\K$ very close to the value~2 throughout the domain (Figure~\ref{fig:numerical_microstructure_cosh_K}). The energy levels obtained numerically are also very close to the expected value of $0$.  Specifically, for meshes with $294\,912$ and $1\,179\,648$ grid vertices, the obtained energy levels are $2.533\cdot 10^{-3}$ and $1.369\cdot 10^{-3}$, respectively.
\begin{figure}[h!]
 \begin{center}
  \begin{tikzpicture}
   \node at (0,0)
                  {\includegraphics[width=0.4\textwidth]{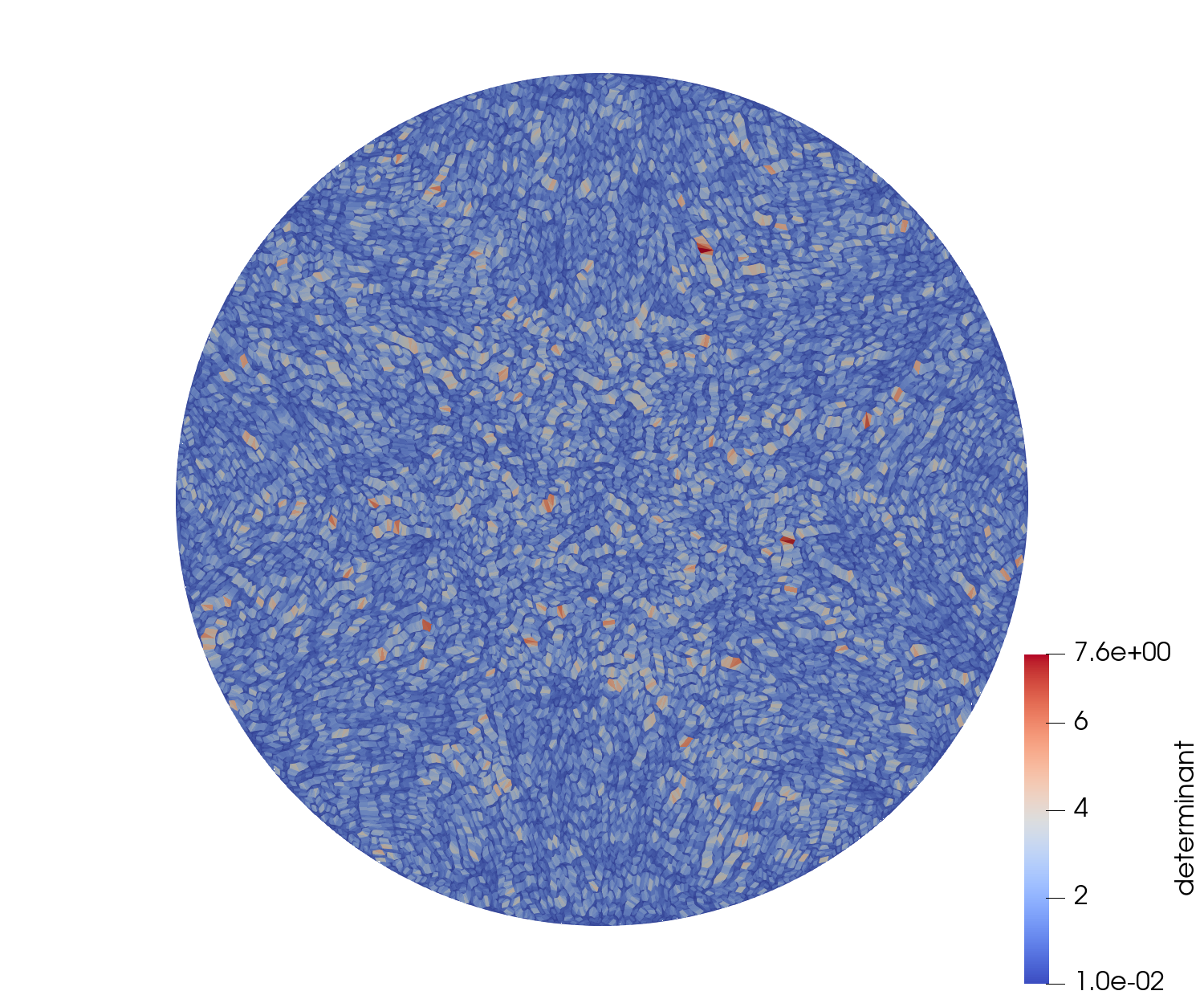}};

   \draw [thick] (1.78,0.8) rectangle ++(0.4,0.2);

   \node [draw, very thick, inner sep=0.5] at (8,1) (magnified)
                  {\includegraphics[width=0.4\textwidth]{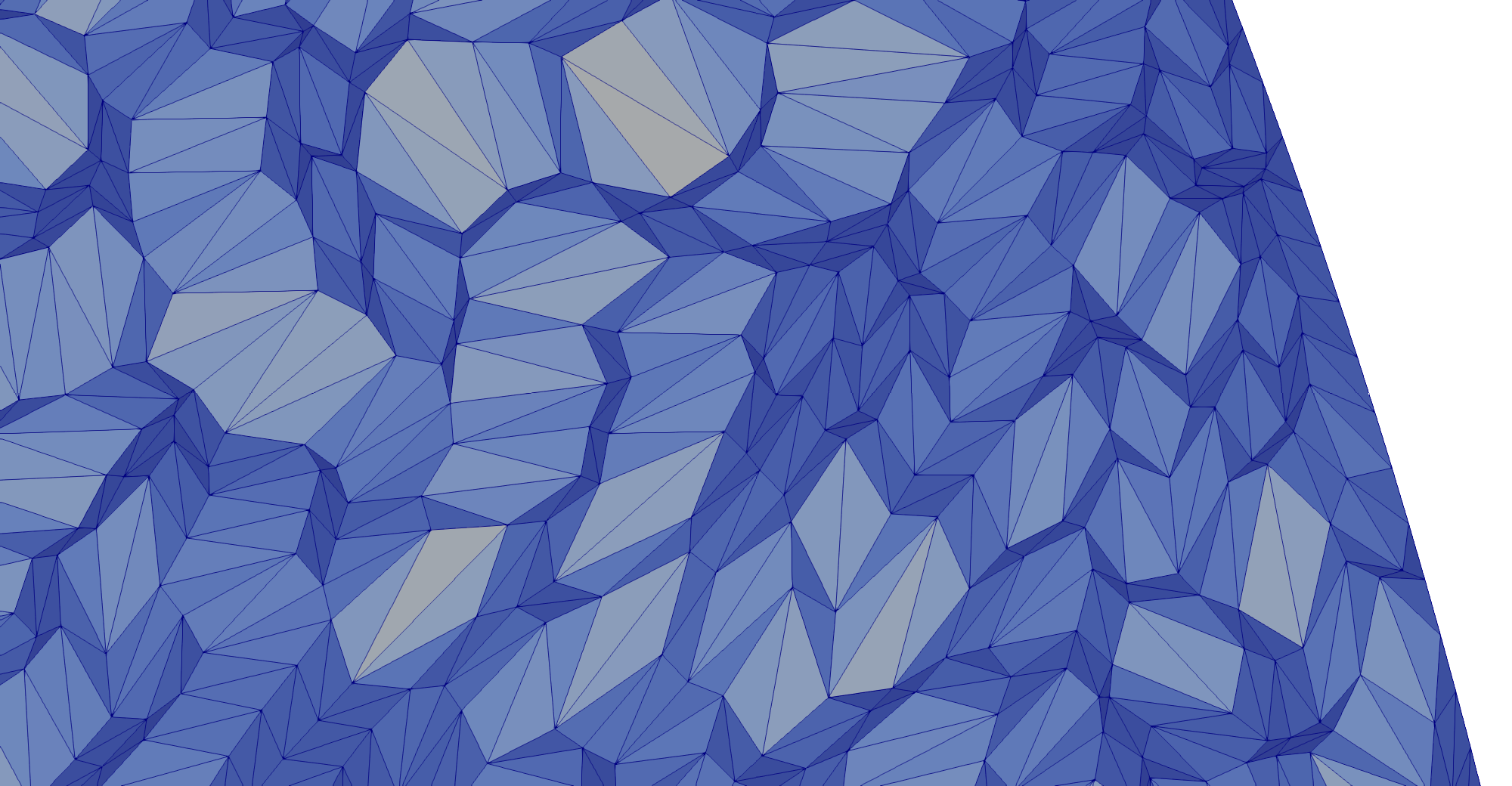}};

   \draw [thin,dashed] (1.78,0.8) -- (magnified.south west);
   \draw [thin,dashed] (1.78,1.0) -- (magnified.north west);
  \end{tikzpicture}
 \end{center}
 \vspace*{-1.4em}
 \caption{Microstructure for the energy $W(F) = \cosh(\K(F) -2) -1$ with boundary conditions $F_0 = \id$
   on a grid with $294\,912$ vertices (deformed configuration).  The coloring shows the distribution of $\det F$.}
 \label{fig:numerical_microstructure_cosh_det}
\end{figure}
\begin{figure}[h!]
 \begin{center}
  \begin{tikzpicture}
   \node at (0,0)
                  {\includegraphics[width=0.4\textwidth]{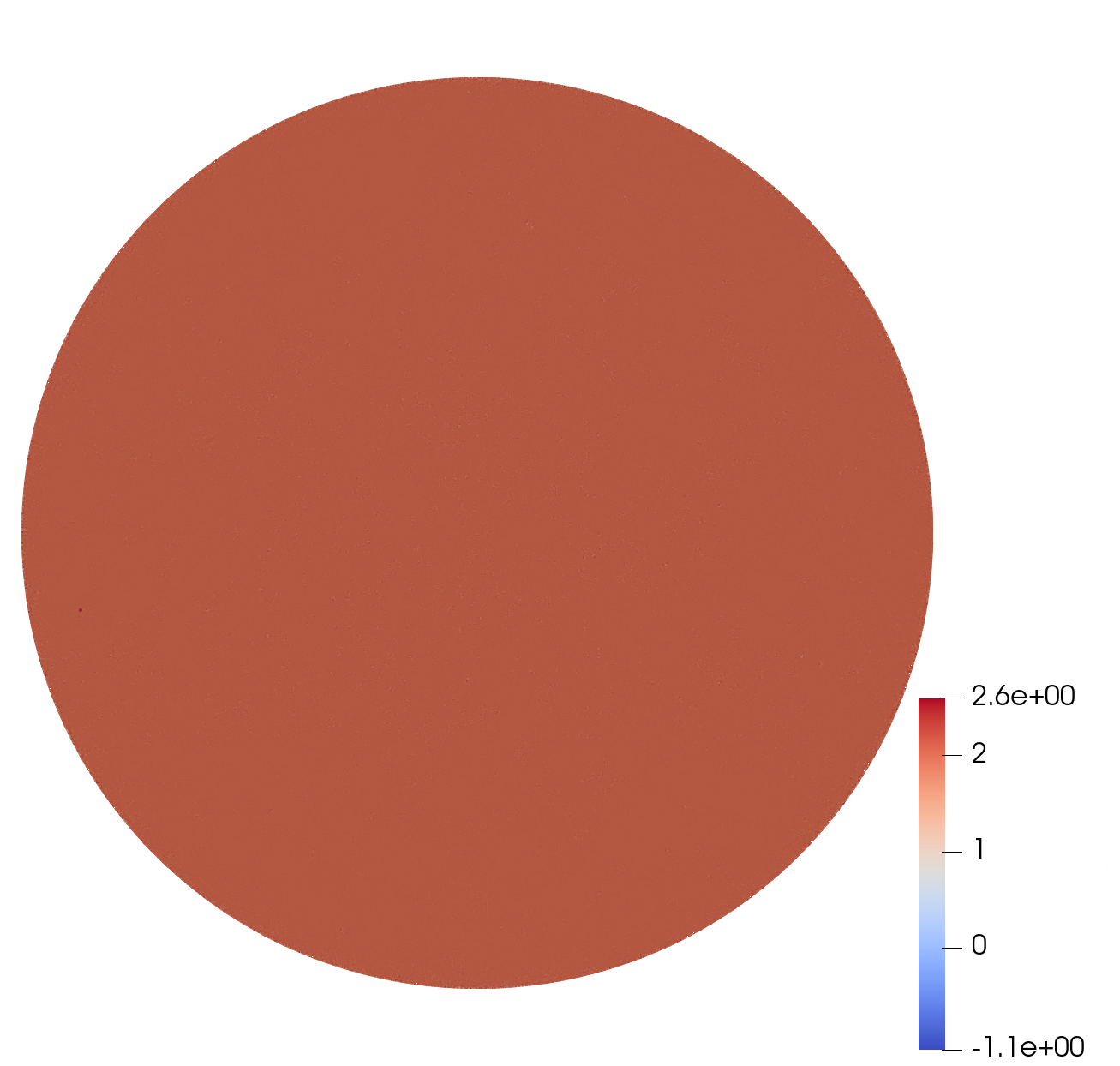}};

   \node at (2.75,-1.85) {$\K$};  %

   \draw [thick] (1.66,0.8) rectangle ++(0.4,0.2);

   \node [draw, very thick, inner sep=0.5] at (8,1) (magnified)
                  {\includegraphics[width=0.4\textwidth]{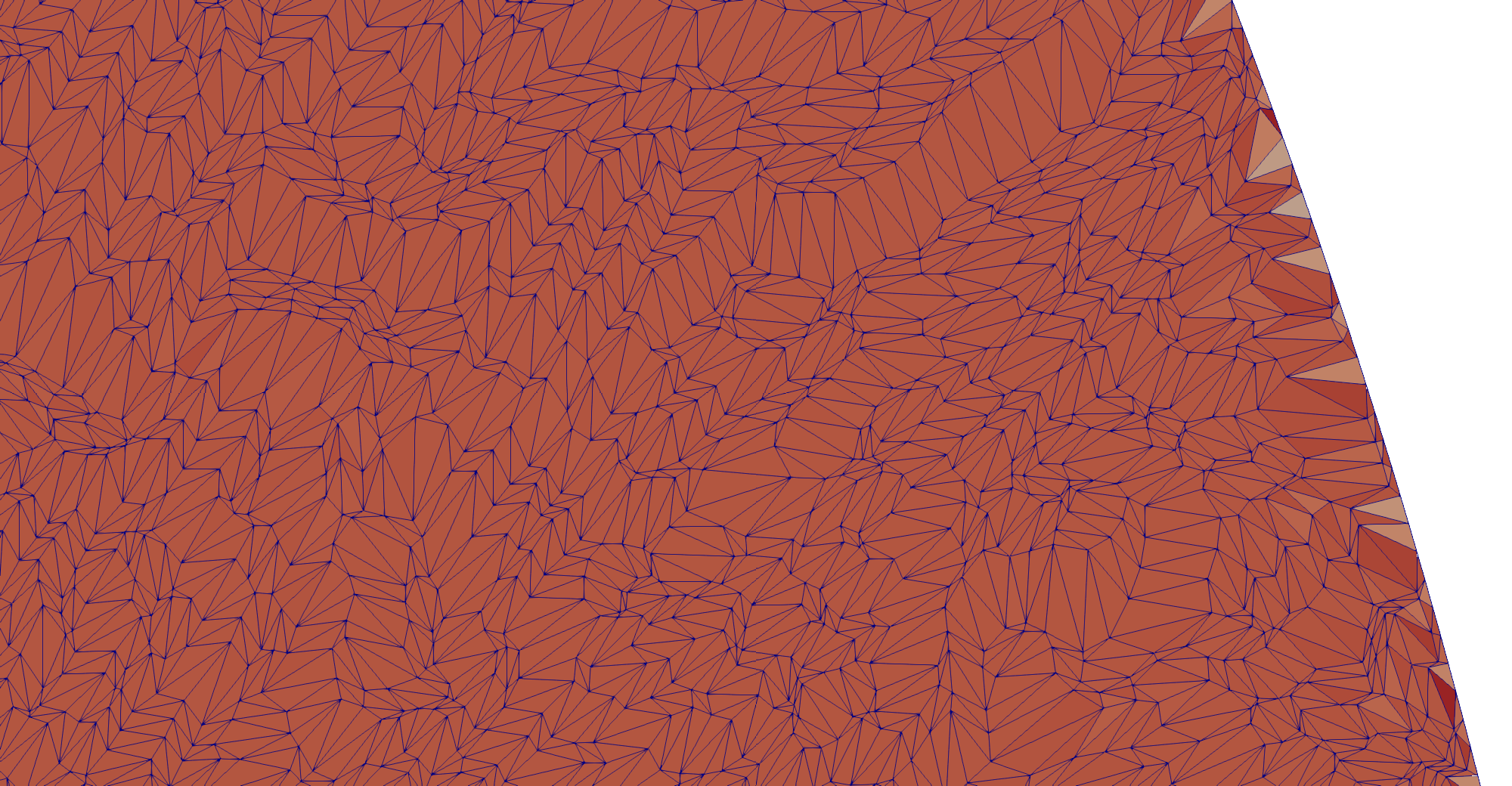}};

   \draw [thin,dashed] (1.66,0.8) -- (magnified.south west);
   \draw [thin,dashed] (1.66,1.0) -- (magnified.north west);
  \end{tikzpicture}
 \end{center}
 \vspace*{-1.4em}
 \caption{Microstructure for the energy $W(F) = \cosh(\K(F) -2) -1$ with boundary conditions $F_0 = \id$
   on a grid with $1\,179\,648$ vertices (deformed configuration).  The coloring shows the distribution of $\K$, which is essentially
   constant except near the boundary.}
 \label{fig:numerical_microstructure_cosh_K}
\end{figure}

In the following, we will discuss a close connection of the quasiconvex envelope \eqref{eq:exampleEnvelope} and the observed microstructure to an earlier result by Yan \cite{yan2003baire,yan2001linear}, which implies that the microstructure should approximately satisfy $\K(\grad\varphi)=2$ almost everywhere on $\Omega$.

In two remarkable contributions \cite{yan2003baire,yan2001linear}, Yan considered the Dirichlet problem
\[
	\opnorm{\nabla\varphi}^n=l\.\det\nabla\varphi\qquad\text{a.e. in }\;\Omega\subset\R^n
\]
for an arbitrary number $l\geq 1$ under affine boundary conditions and obtained the following existence result.
\begin{theorem}[{\cite[Theorem 1.2]{yan2003baire}}]
\label{theorem:yan}
	Let $l\geq 1$. Given any affine map $x \mapsto F_0\.x+b$, the Dirichlet problem
	\begin{alignat*}{2}
		\norm{\nabla\varphi}_{\rm op}^n & =l\.\det\nabla\varphi & \qquad & \text{a.e.\ in }\; \Omega\\
		\varphi(x) & =F_0\.x+b  && \text{on }\; \partial \Omega
	\end{alignat*}
	is solvable in $W^{1,n}(\Omega;\R^n)$ if and only if $\norm{F_0}_{\rm op}^n\leq l\det F_0$.
\end{theorem}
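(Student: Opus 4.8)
The plan is to prove the two implications separately: necessity is a short computation, while sufficiency is an existence result for a differential inclusion, established by the Baire category method.

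For \emph{necessity}, suppose $\varphi\in W^{1,n}(\Omega;\R^n)$ solves the problem; subtracting the translation we may assume $b=0$, so that $\frac{1}{\abs{\Omega}}\int_\Omega\nabla\varphi\,\dx=F_0$. The map $F\mapsto\opnorm{F}^n$ is an increasing (power $\geq 1$) function of the convex operator norm, hence convex; since $\nabla\varphi\in L^n\subset L^1(\Omega)$ and, by the equation, $\opnorm{\nabla\varphi}^n=l\det\nabla\varphi\in L^1(\Omega)$, Jensen's inequality yields $\opnorm{F_0}^n\leq\frac{1}{\abs{\Omega}}\int_\Omega\opnorm{\nabla\varphi}^n\,\dx$. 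Invoking the equation once more together with the null-Lagrangian property of the determinant (so that $\int_\Omega\det\nabla\varphi\,\dx=\abs{\Omega}\det F_0$ for $W^{1,n}$ maps with affine boundary data), the right-hand side equals $l\det F_0$, and thus $\opnorm{F_0}^n\leq l\det F_0$.

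For \emph{sufficiency}, assume $\opnorm{F_0}^n\leq l\det F_0$ and (again) $b=0$. If equality holds, or if $F_0=0$, the affine map $\varphi(x)=F_0\,x$ is itself a solution, so assume the strict inequality $\opnorm{F_0}^n<l\det F_0$; this forces $\det F_0>0$, so $F_0$ lies in the nonempty open set $V_0\colonequals\{F\in\Rnn\setvert\det F>0,\ \opnorm{F}^n<l\det F\}$. Setting $\mathcal{K}\colonequals\{F\in\Rnn\setvert\opnorm{F}^n=l\det F\}$, the goal becomes: produce $\varphi\in F_0\,x+W^{1,\infty}_0(\Omega;\R^n)$ with $\nabla\varphi\in\mathcal{K}$ almost everywhere; such a $\varphi$ is in particular a $W^{1,n}$ solution of the stated Dirichlet problem. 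I would invoke the standard Baire category existence theorem for implicit partial differential equations (cf.\ \cite[Ch.\,10]{Dacorogna08} and the argument of Yan \cite{yan2003baire}): taking $V$ to be $V_0$ intersected with a large open ball around $F_0$ (so $V$ is bounded and $\overline{V}\subset\{F\setvert\opnorm{F}^n\leq l\det F\}$), solvability of the inclusion $\nabla\varphi\in\mathcal{K}$ a.e.\ with affine boundary datum $F_0$ follows once one shows that $\mathcal{K}$ has the \emph{relaxation property} with respect to $V$ --- that is, for every $F\in V$ and every $\eps>0$ there is a piecewise affine $\psi\in W^{1,\infty}_0(Q;\R^n)$ on a reference cube $Q$ with $F+\nabla\psi\in\overline{V}$ a.e.\ and $\de(F+\nabla\psi,\mathcal{K})<\eps$ outside a subset of $Q$ of measure at most $\eps\,\abs{Q}$. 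Granting the relaxation property, the theorem produces a dense $G_\delta$ set of exact solutions of the inclusion (in the appropriate complete metric space of admissible maps) with the prescribed boundary values.

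The relaxation property is the crux, and this is where the specific geometry of $\opnorm{\cdot}$ and $\det$ enters; $\psi$ is built as an iterated laminate supported on $\mathcal{K}$ except on a transition layer of small measure. In the model case $n=2$, write $F=\diag(\lambda_1,\lambda_2)$ with $\lambda_1\geq\lambda_2>0$ and $\opnorm{F}^2<l\det F$; along the rank-one segment $s\mapsto\diag(\lambda_1,s)$ the ratio $\opnorm{\diag(\lambda_1,s)}^2/\det\diag(\lambda_1,s)=\max\{\lambda_1/s,\,s/\lambda_1\}$ is convex in $s$ and equals $l$ precisely at $s_-=\lambda_1/l$ and $s_+=l\,\lambda_1$, with $\lambda_2\in(s_-,s_+)$; thus $F$ is a convex (and rank-one) combination of $\diag(\lambda_1,s_\pm)\in\mathcal{K}$, and a simple laminate along this segment, cut off near $\partial Q$, supplies $\psi$. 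For general $n$ the analogous computation is carried out in the singular-value variables, adjusting one singular value at a time and recombining, while (i) choosing the successive rank-one directions along the singular vectors so that the whole construction stays in $\overline{V}$, (ii) arranging that the measure of the set where $\nabla\psi$ is far from $\mathcal{K}$ decays geometrically under refinement, and (iii) absorbing the boundary layer. I expect step (i) to be the principal obstacle: $\opnorm{\cdot}$ is non-smooth where the top singular value is repeated and $\mathcal{K}$ is non-convex, so the rank-one connections must be chosen with care; the payoff is precisely that $\{F\setvert\opnorm{F}^n\leq l\det F\}$ is rank-one convex and coincides with the (rank-one convex, quasiconvex and polyconvex) hull of $\mathcal{K}$, which both explains why the bound $\opnorm{F_0}^n\leq l\det F_0$ is sharp and, for $n=2$, is consistent with Theorem~\ref{theorem:mainResult} applied to penalizations of the distortion such as $W(F)=h\bigl(\tfrac{\lambda_1}{\lambda_2}\bigr)$ with $h$ vanishing on $[1,l]$.
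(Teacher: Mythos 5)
The paper does not prove this statement at all: Theorem~\ref{theorem:yan} is imported verbatim from Yan \cite[Theorem 1.2]{yan2003baire} and only used, via Corollaries~\ref{corollary:solvable} and~\ref{corollary:yan}, as an external input, so there is no internal proof to compare yours against; your proposal in fact follows the strategy of Yan's original paper (the Baire category method for the inclusion $\nabla\varphi\in\mathcal{K}$ is precisely his method). Your necessity half is correct and essentially complete: convexity of $F\mapsto\opnorm{F}^n$, Jensen's inequality, the identity $\frac{1}{\abs{\Omega}}\int_\Omega\nabla\varphi\,\dx=F_0$ for affine boundary data, and the null-Lagrangian identity $\int_\Omega\det\nabla\varphi\,\dx=\abs{\Omega}\det F_0$ (valid for $W^{1,n}$ maps with affine trace) yield $\opnorm{F_0}^n\leq l\det F_0$.

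The sufficiency half, however, is an outline rather than a proof, and the gap sits exactly where you say it does: the relaxation property is verified only for $n=2$ and diagonal $F$, while for general $n$ you merely assert that one can adjust singular values \enquote{one at a time and recombine}. Two remarks. First, your $n=2$ computation actually extends verbatim to every $n$, so the obstacle (i) you fear is not there: since $\opnorm{\cdot}$ depends only on the largest singular value, along the rank-one line $s\mapsto\diag(\lambda_1,\dotsc,\lambda_{n-1},s)$ (ordered singular values, varying the smallest) the ratio $\opnorm{\cdot}^n/\det$ equals $\lambda_1^{n}/(\lambda_1\cdots\lambda_{n-1}\,s)$ for $s\leq\lambda_1$ and $s^{\,n-1}/(\lambda_1\cdots\lambda_{n-1})$ for $s\geq\lambda_1$, hence is decreasing-then-increasing and hits $l$ at two values $s_-<\lambda_n<s_+$; every $F$ in the open sub-level set is therefore a single rank-one convex combination of two elements of $\mathcal{K}$, no iterated recombination needed. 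Second, what genuinely remains to be supplied is the packaging that turns this one-dimensional geometry into the relaxation property and then into exact $W^{1,n}$ solutions: the approximation lemma must keep the laminate's gradients inside the closed sub-level set \emph{and} inside a fixed bounded set (note $s_+=l\,\lambda_1$ can leave the ball you intersected with when $F$ is close to its boundary, so one should work with an in-approximation by the open sets $\{\opnorm{F}^n<l_k\det F\}$, $l_k\nearrow l$, or with Yan's admissible classes), the boundary layer must be absorbed, and the passage from approximate to exact solutions must be justified by the Baire-category (or convex integration) machinery. Citing the general existence theorem of \cite[Ch.\,10]{Dacorogna08} for this last step is legitimate, but citing \enquote{the argument of Yan \cite{yan2003baire}} is circular here, since that is the very theorem being proved; as it stands your sufficiency argument is a correct plan with the decisive verification only half done.
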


Since in the two-dimensional case $\frac{\norm{\nabla\varphi}_{\rm op}^2}{\det\nabla\varphi} =\frac{\lambdamax}{\lambdamin}=K(\nabla\varphi)$, Yan's result can be stated in terms of the linear distortion $K$ for $n=2$.
\begin{corollary}\label{corollary:solvable}
	In the planar case $n=2$, for any affine map $x \mapsto F_0\.x+b$, the Dirichlet problem
	\begin{alignat*}{2}
		K(\nabla\varphi)&=l & \qquad & \text{a.e.\ in }\; \Omega\\
		\varphi(x) & = F_0\.x+b  && \text{on }\; \partial \Omega
	\end{alignat*}
	is solvable in $W^{1,2}(\Omega;\R^2)$ if and only if $K(F_0)\leq l$.
\end{corollary}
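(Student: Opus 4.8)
The plan is to read the corollary off from Theorem~\ref{theorem:yan} in the planar case $n=2$ via the elementary identity, already recorded in the introduction, that for every $F\in\GLp(2)$
\[
	K(F)=\frac{\opnorm{F}^2}{\det F}=\frac{\lambdamax(\sqrt{F^TF})}{\lambdamin(\sqrt{F^TF})}\,.
\]
Thus, restricted to matrices of positive determinant, the pointwise equation $\opnorm{F}^2=l\,\det F$ \emph{is} the equation $K(F)=l$, and the inequality $\opnorm{F_0}^2\leq l\,\det F_0$ \emph{is} $K(F_0)\leq l$; so the argument is essentially a change of notation, and the only points that require attention are the domain restrictions coming from the fact that $K$ is defined only on $\GLp(2)$.

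First I would treat the direction ``$K(F_0)\leq l\Rightarrow$ solvable'': since $K(F_0)$ presupposes $F_0\in\GLp(2)$, we get $\opnorm{F_0}^2=K(F_0)\,\det F_0\leq l\,\det F_0$, so Theorem~\ref{theorem:yan} with $n=2$ supplies $\varphi\in W^{1,2}(\Omega;\R^2)$ with $\opnorm{\nabla\varphi}^2=l\,\det\nabla\varphi$ a.e.\ and $\varphi(x)=F_0\,x+b$ on $\partial\Omega$, which — once one knows $\det\nabla\varphi>0$ a.e.\ — is a solution of $K(\nabla\varphi)=l$. For the converse, a solution of $K(\nabla\varphi)=l$ a.e.\ automatically has $\nabla\varphi\in\GLp(2)$ a.e.\ and hence satisfies $\opnorm{\nabla\varphi}^2=l\,\det\nabla\varphi$ a.e.; Theorem~\ref{theorem:yan} then gives $\opnorm{F_0}^2\leq l\,\det F_0$. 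Because $l\geq1$ and $\opnorm{F_0}^2\geq0$, this forces $\det F_0\geq0$, and using that $\det$ is a null Lagrangian (so $\int_\Omega\det\nabla\varphi\,\dx=\abs{\Omega}\,\det F_0$) together with $\det\nabla\varphi>0$ a.e.\ excludes $\det F_0=0$; hence $F_0\in\GLp(2)$ and $K(F_0)=\opnorm{F_0}^2/\det F_0\leq l$.

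I expect the only genuine obstacle to be the orientation issue hidden in the step ``$\opnorm{\nabla\varphi}^2=l\,\det\nabla\varphi$ a.e.\ $\Rightarrow$ $K(\nabla\varphi)=l$ a.e.'': Yan's equation a priori only forces $\det\nabla\varphi\geq0$, vanishing exactly where $\nabla\varphi=0$, whereas $K$ requires strict positivity. One therefore has to check that the map produced by Yan's Baire-category argument can be taken orientation-preserving almost everywhere when $\det F_0>0$; this is inherent in that construction (and in any case the only boundary data of interest in the present paper have $F_0\in\GLp(2)$). Granting this, nothing further is required: the corollary is Theorem~\ref{theorem:yan} transcribed through the identity $K=\opnorm{\cdot}^2/\det$.
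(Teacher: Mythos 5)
Your proposal is correct and follows essentially the same route as the paper, which presents the corollary as an immediate restatement of Theorem~\ref{theorem:yan} for $n=2$ via the identity $K(F)=\opnorm{F}^2/\det F$ and offers no further argument. Your additional care about the orientation/degeneracy point (that $K(\nabla\varphi)=l$ requires $\det\nabla\varphi>0$ a.e.\ while Yan's equation alone only gives $\det\nabla\varphi\geq0$) goes beyond what the paper records but is a sensible refinement of the same transcription argument.
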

Furthermore, recalling that $\K=\frac{1}{2}\left(K+\frac{1}{K}\right)$ and letting $L=\frac{1}{2}\left(l+\frac{1}{l}\right)$, Corollary \ref{corollary:solvable} can equivalently be expressed in terms of the distortion $\K$.
\begin{corollary}\label{corollary:yan}
In the planar case $n=2$ for any affine map $x \mapsto F_0\.x+b$, the Dirichlet problem
	\begin{alignat*}{2}
		\tfrac{\norm{\nabla\varphi}^2}{2\.\det\nabla\varphi} =\K(\nabla\varphi)&=L & \qquad & \text{a.e.\ in }\; \Omega\\
		\varphi(x) & = F_0\.x+b  && \text{on }\; \partial \Omega
	\end{alignat*}
	is solvable in $W^{1,2}(\Omega;\R^2)$ if and only if $\frac{\norm{F_0}^2}{2\.\det F_0}=\K(F_0)\leq L$.
\end{corollary}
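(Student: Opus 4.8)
The plan is to obtain Corollary~\ref{corollary:yan} as a purely algebraic reformulation of Corollary~\ref{corollary:solvable}, using the monotone bijection between the linear distortion $K$ and the outer distortion $\K$ on $[1,\infty)$. The key observation, already recorded in the introduction, is that $\K = \frac{1}{2}\bigl(K+\frac{1}{K}\bigr)$ and, conversely, $K = \K+\sqrt{\K^2-1} = e^{\arccosh\K}$; the map $t\mapsto\frac12(t+\frac1t)$ is a strictly increasing bijection from $[1,\infty)$ onto $[1,\infty)$, so the pointwise condition $K(\nabla\varphi)=l$ is equivalent to $\K(\nabla\varphi)=\frac12(l+\frac1l)=L$, and likewise the constraint $K(F_0)\le l$ is equivalent to $\K(F_0)\le L$.

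Concretely, I would argue as follows. First, fix $l\ge 1$ and set $L\colonequals\frac12(l+\frac1l)\ge 1$. Since for any $F\in\GLp(2)$ one has $K(F)=\frac{\opnorm{F}^2}{\det F}=\frac{\lambdamax}{\lambdamin}\ge 1$, and since the function $\phi\col[1,\infty)\to[1,\infty)$, $\phi(t)=\frac12(t+\frac1t)$, is strictly increasing with $\phi(K(F))=\K(F)$, the equation $K(\nabla\varphi(x))=l$ holds for a.e.\ $x$ if and only if $\K(\nabla\varphi(x))=\phi(l)=L$ for a.e.\ $x$ (applying $\phi$ to both sides in one direction, and its inverse $\psi(s)=s+\sqrt{s^2-1}$ in the other). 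Similarly, $K(F_0)\le l \iff \phi(K(F_0))\le\phi(l) \iff \K(F_0)\le L$, again by monotonicity of $\phi$. Substituting these equivalences into the statement of Corollary~\ref{corollary:solvable} immediately yields Corollary~\ref{corollary:yan}, after noting that $\K(\nabla\varphi)=\frac{\norm{\nabla\varphi}^2}{2\det\nabla\varphi}$ and $\K(F_0)=\frac{\norm{F_0}^2}{2\det F_0}$ by definition \eqref{eq:distortionFunction}.

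The only point requiring any care is the measurability/degenerate-case bookkeeping: one should note that a $W^{1,2}$ solution of either Dirichlet problem automatically satisfies $\det\nabla\varphi>0$ a.e.\ (so that $K$ and $\K$ are defined a.e.), and that the boundary datum $x\mapsto F_0x+b$ is orientation-preserving precisely when $\det F_0>0$, which is implicitly assumed throughout since $F_0\in\GLp(2)$. With that understood, there is no real obstacle here — the corollary is a translation of Yan's theorem through the elementary change of variables $\K\leftrightarrow K$, and the substance of the result lies entirely in Corollary~\ref{corollary:solvable} (hence in Theorem~\ref{theorem:yan}), not in this reformulation. Accordingly, I would expect the ``proof'' to consist of little more than the sentence ``this follows from Corollary~\ref{corollary:solvable} and the identity $\K=\frac12(K+\frac1K)$ together with the monotonicity of $t\mapsto\frac12(t+\frac1t)$ on $[1,\infty)$.''
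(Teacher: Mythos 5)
Your proposal is correct and follows exactly the paper's route: the paper itself derives Corollary~\ref{corollary:yan} from Corollary~\ref{corollary:solvable} simply by noting $\K=\frac12\bigl(K+\frac1K\bigr)$ with $L=\frac12\bigl(l+\frac1l\bigr)$, i.e.\ by the same strictly increasing bijection between $K$ and $\K$ on $[1,\infty)$ that you invoke. Your additional bookkeeping remarks are fine but not needed; the paper states the corollary as an immediate reformulation, just as you anticipated.
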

Using Corollary \ref{corollary:yan}, it is possible to obtain the relaxation result \eqref{eq:exampleEnvelope} by directly computing the quasiconvex envelope of $W(F)=\Psi_L(\K(F))=\cosh(\K(F)-L)-1$, i.e.
\begin{align*}
	QW(F_0)=\inf\left\{ \frac{1}{\abs{\Omega}} \int_{B_1(0)} \!\! W(\nabla\varphi)\,\dx\,,\;\varphi|_{\partial B_1(0)}=F_0\.x \right\}=\inf\left\{ \frac{1}{\abs{\Omega}} \int_{B_1(0)} \!\!\! \Psi_L(\K(\grad\varphi))\,\dx\,,\;\varphi|_{\partial B_1(0)}=F_0\.x \right\}\,.
\end{align*}
For $\K(F_0)=L$, the infimum value zero is already realized by the homogeneous solution. For $\K(F_0)<L$, although there is no homogeneous equilibrium solution, there exist a deformation $\widehat\varphi\in W^{1,2}(\Omega;\R^2)$ with $\widehat\varphi|_{\partial\Omega}=F_0\.x$ and $\K(\nabla\widehat\varphi)=L$ due to Corollary~\ref{corollary:yan}. Then $\Psi_L(\K(\nabla\widehat\varphi))=0$ and thus
\begin{align*}
	QW(F_0)=\begin{cases}
		\hfill 0 &:\; \K(F_0)\leq L\,, \\
		W(F_0) &:\; \K(F_0)\geq L\,,
	\end{cases}
\end{align*}
since $\Psi_L$ is monotone increasing and convex for $\K(F_0)\geq L$.
\section*{Acknowledgements}
We thank Sören Bartels (University of Freiburg) for providing his \textsc{Matlab} code to approximate the polyconvex envelope~\cite[p.\,285]{bartels2015numerical}, as well as Jörg Schröder (University of Duisburg-Essen) and Klaus Hackl (Ruhr University Bochum) for helpful discussions.
\par
\footnotesize
\captionsetup{font=footnotesize}
\section{References}
\printbibliography[heading=none]
\begin{appendix}
\section{The quasiconvex envelope for a class of conformal energies}\label{appendix:Dacorogna}
The concept of monotone-convex envelopes is directly connected to an earlier result by Dacorogna and Koshigoe \cite{dacorogna1993different}, who obtained an explicit relaxation result for a subclass of conformal energy functions.
\begin{lemma}[Proposition 5.1 \cite{dacorogna1993different}]
\label{lemma:dacorognaEnergy}
	Let $W\col\R^{2\times 2}\to\R$ be of the form
	\begin{equation}
	\label{eq:dacorognaConformalSpecialCase}
		W(F)\colonequals g(\sqrt{\norm{F}^2-2\.\det F})\,,\qquad g\col[0,\infty)\to\R\,.
	\end{equation}
	Define
	\[
		\gtilde\col\R\to\R\,,\qquad\gtilde(x)=
		\begin{cases}
		\hfill g(x) &:\; x>0\,,\\
		g(-x) &:\; x\leq 0\,.
		\end{cases}
	\]
	Then
	\begin{equation*}
		CW(F)=PW(F)=QW(F)=RW(F)=\gtilde^{**}\bigl(\sqrt{\norm{F}^2-2\.\det F}\.\bigr)\,,
	\end{equation*}
where $\gtilde^*$ is the Legendre-transformation of $\gtilde$ and $\gtilde^{**}=\left(\gtilde^*\right)^*$.
\end{lemma}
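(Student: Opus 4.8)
The plan is to reduce everything to the orthogonal splitting $F = F_+ + F_-$ of a $2\times 2$ matrix into its \emph{conformal} part $F_+ \colonequals \tfrac12(F + \mathrm{cof}\,F) \in \R\.\SO(2)$ and its \emph{anticonformal} part $F_- \colonequals \tfrac12(F - \mathrm{cof}\,F)$, which are orthogonal with respect to the Frobenius inner product. A short computation gives $\norm{F}^2 = \norm{F_+}^2 + \norm{F_-}^2$ and $\det F = \tfrac12\bigl(\norm{F_+}^2 - \norm{F_-}^2\bigr)$, whence
\[
	\norm{F}^2 - 2\det F = 2\,\norm{F_-}^2\,,\qquad\text{i.e.}\qquad \mu(F)\colonequals\sqrt{\norm{F}^2 - 2\det F} = \sqrt2\;\norm{F_-}\,.
\]
Thus $\mu$ is a seminorm — in particular convex and positively $1$-homogeneous — and $W(F) = G(F_-)$, where $G\col\R^2\to\R$, $G(v)\colonequals\gtilde(\sqrt2\,\norm v)$, is a \emph{radially symmetric} function on the two-dimensional space of anticonformal matrices (identified with $\R^2$). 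The second elementary observation I would record is that $H\in\R^{2\times2}$ has $\rank H\leq 1$ if and only if $\det H = 0$, i.e.\ if and only if $\norm{H_+} = \norm{H_-}$.

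For the lower bound, note that $\gtilde^{**}$, being the closed convex hull of the even function $\gtilde$, is itself even and convex, hence non-decreasing on $[0,\infty)$; therefore $F\mapsto\gtilde^{**}(\mu(F))$ is convex (a non-decreasing convex function of the nonnegative convex map $\mu$) and satisfies $\gtilde^{**}(\mu(F))\leq\gtilde(\mu(F)) = g(\mu(F)) = W(F)$. Together with the general chain $CW\leq PW\leq QW\leq RW$ (cf.\ \eqref{eq:convexImplication}), this gives $\gtilde^{**}(\mu(F))\leq CW(F)\leq PW(F)\leq QW(F)\leq RW(F)$ for every $F\in\R^{2\times2}$.

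The core of the argument is the reverse estimate $RW(F)\leq\gtilde^{**}(\mu(F))$. Fix $F_0$ and set $v_0\colonequals (F_0)_-$. Given \emph{any} $v_1,v_2\in\R^2$ and $\theta\in[0,1]$ with $\theta v_1+(1-\theta)v_2 = v_0$, pick a conformal matrix $\delta$ with $\norm\delta = \norm{v_1-v_2}$ (possible since the conformal plane is two-dimensional) and set $F_1\colonequals (F_0)_+ + (1-\theta)\,\delta + v_1$ and $F_2\colonequals (F_0)_+ - \theta\,\delta + v_2$. Then $\theta F_1 + (1-\theta)F_2 = F_0$, while $\norm{(F_1-F_2)_+} = \norm\delta = \norm{v_1-v_2} = \norm{(F_1-F_2)_-}$ forces $\det(F_1-F_2)=0$, i.e.\ $\rank(F_1-F_2)\leq 1$; moreover $W(F_i) = G\bigl((F_i)_-\bigr) = G(v_i)$. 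Since $RW$ is rank-one convex and $RW\leq W$, this yields $RW(F_0)\leq\theta\,G(v_1)+(1-\theta)\,G(v_2)$. Iterating the construction along a binary tree — at every internal node the only compatibility needed is again the rank-one condition in $\R^{2\times2}$, which as just shown is no obstruction — gives $RW(F_0)\leq\sum_i\theta_i\,G(v_i)$ for every finite convex combination $\sum_i\theta_i v_i = v_0$; passing to the infimum identifies the right-hand side with the convex envelope $\conv G$ of $G$ at $v_0$.

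Finally one evaluates $\conv G$. As $G$ is radial, so is $\conv G$, and the infimum defining $\conv G(v_0)$ is already exhausted by \emph{collinear} splittings $v_i = s_i\,v_0/\norm{v_0}$; admitting $s_i$ of either sign (using the evenness of $\gtilde$, which also makes antipodal splits available for small $\norm{v_0}$), one obtains
\[
	\conv G(v_0) = \inf\Bigl\{\theta\,\gtilde(\sqrt2\,s_1) + (1-\theta)\,\gtilde(\sqrt2\,s_2) \setvert s_1,s_2\in\R,\ \theta s_1 + (1-\theta)s_2 = \norm{v_0}\Bigr\} = \gtilde^{**}\bigl(\sqrt2\,\norm{v_0}\bigr) = \gtilde^{**}(\mu(F_0))\,.
\]
Combined with the lower bound, all four envelopes coincide and equal $\gtilde^{**}\bigl(\sqrt{\norm F^2 - 2\det F}\bigr)$. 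I expect the binary-tree lamination to be the main obstacle: one must check carefully that rank-one compatibility in $\R^{2\times2}$ is genuinely non-restrictive once the energy-irrelevant conformal components are used as free parameters — which is exactly where the identity $\norm F^2 - 2\det F = 2\norm{F_-}^2$ and the two-dimensionality of the conformal plane are decisive. A minor further point is the well-definedness of $\gtilde^{**}$: just as for the monotone-convex envelope $\Cm h$ in Theorem~\ref{theorem:mainResult}, $RW$ exists only if $g$ is, say, continuous and bounded below, which is to be assumed throughout.
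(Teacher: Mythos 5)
Your proposal is correct, and it is genuinely different from what the paper does: the paper offers no proof of this lemma at all, simply citing Dacorogna--Koshigoe (Proposition~5.1) and \v{S}ilhav\'y, and only remarks afterwards why the right-hand side is convex. Your argument is a self-contained two-sided proof: the identity $\norm{F}^2-2\det F=2\norm{F_-}^2$ for the conformal/anticonformal splitting (which is exactly the paper's computation $\de^2(F,\CSO(2))=\tfrac12(\norm{F}^2-2\det F)$, the anticonformal plane being the orthogonal complement of $\overline{\CSO(2)}$) gives the convex lower bound $\gtilde^{**}(\sqrt{\norm{F}^2-2\det F})\leq CW(F)$ since $\gtilde^{**}$ is even, convex, hence non-decreasing on $[0,\infty)$, and is composed with a seminorm; and the lamination construction $F_{1,2}=(F_0)_+\pm(\text{conformal shift})+v_{1,2}$ with $\norm{\delta}=\norm{v_1-v_2}$ correctly produces rank-one connected splittings realizing arbitrary two-point convex combinations of the anticonformal part at zero cost in the conformal direction, yielding $RW(F_0)\leq\theta\,\gtilde(\sqrt2\,s_1)+(1-\theta)\,\gtilde(\sqrt2\,s_2)$ and hence the matching upper bound. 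Two small points: the binary-tree iteration you worry about is unnecessary --- by Carath\'eodory in one dimension the convex envelope of $\gtilde$ is already the infimum over \emph{two-point} splittings, so a single lamination step suffices (this also lets you avoid invoking rank-one convexity of $RW$ itself, by applying the estimate to every rank-one convex minorant $w\leq W$ and taking the supremum); and the identification of that infimum with $\gtilde^{**}$, as well as the finiteness of $RW$, does require $g$ to be bounded below (and, for $C\gtilde=\gtilde^{**}$, an affine minorant/lsc argument), which you flag and which matches the standing assumptions the paper itself imposes in the remark following the lemma. With those hypotheses made explicit, your proof is complete and arguably more elementary than the cited source, and it also exhibits the laminate microstructure that attains the relaxed energy.
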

The same result can be found in \cite[Prop.~4.1]{vsilhavy2001rank}. Note that the convexity of the mapping $F\mapsto \gtilde^{**}(\sqrt{\norm{F}^2-2\.\det F})=\gtilde^{**}(\sqrt{(\lambda_1-\lambda_2)^2})$ follows directly \cite{ball1977constitutive} from the fact that $\gtilde^{**}$ is convex and non-decreasing on $[0,\infty)$.
Furthermore, if $g\geq0$, then $W$ of the form \eqref{eq:dacorognaConformalSpecialCase} is a conformal energy in the sense of Footnote \ref{footnote:conformalEnergy}.

If $g$ is continuous and bounded below, then based on \cite[Theorem 2.43]{Dacorogna08} it is easy to show that the monotone-convex envelope of $g$ is exactly the restriction of $\gtilde$ to $[0,\infty)$:
\begin{align*}
	\Cm g = (C\widetilde g)\big|_{[0,\infty)}\,,\qquad C\widetilde g=g^{**}\,.
\end{align*}

\begin{figure}[h!]
\centering
\def\yoffset{0.14}
	\tikzsetnextfilename{hNotConvex}
	\begin{minipage}[t]{.49\linewidth}
	\centering
		\begin{tikzpicture}
			\begin{axis}[
			axis x line=middle,axis y line=middle,
			x label style={at={(current axis.right of origin)},anchor=north, below},
			xlabel=$x$, ylabel={},
			xmin=-0.6, xmax=4,
			ymin=-0.0035, ymax=1.645,
			width=1\linewidth,
			height=\graphRatio\linewidth,
			ytick=\empty,
			xtick={0},
			hide obscured x ticks=false
			]
				\addplot[black, smooth][domain=0:4, samples=\sample]{(x-1)^2-0.27*(x-1)^4+0.02*(x-1)^6+\yoffset} node[pos=.63, above right] {$g(x)$};
				\addplot[red, dash pattern=on 1pt off 2pt,shorten <=1, very thick]coordinates{(0,\yoffset) (1,\yoffset)};
				\addplot[red, dash pattern=on 1pt off 2pt,shorten <=1, very thick] coordinates {(1.05,\yoffset) (3.55, {0.584+\yoffset})} node[pos=.49, below right] {$\Cm g(x)$};
				\addplot[red, dash pattern=on 1pt off 2pt,shorten <=1, very thick][domain=3.54:4, samples=\sample]{(x-1)^2-0.27*(x-1)^4+0.02*(x-1)^6+\yoffset};
				\addplot[blue, dash pattern=on 3pt off 3pt, very thick][domain=0:1, samples=\sample]{(x-1)^2-0.27*(x-1)^4+0.02*(x-1)^6+\yoffset} node[pos=.35, above right] {$Cg(x)$};
				\addplot[blue, dash pattern=on 3pt off 3pt, very thick] coordinates {(1.05,\yoffset) (3.55, {0.584+\yoffset})};
				\addplot[blue, dash pattern=on 3pt off 3pt, very thick][domain=3.54:4, samples=\sample]{(x-1)^2-0.27*(x-1)^4+0.02*(x-1)^6+\yoffset};
			\end{axis}
		\end{tikzpicture}
	\end{minipage}
	\begin{minipage}[t]{.49\linewidth}
	\centering
		\begin{tikzpicture}
			\begin{axis}[
			axis x line=middle,axis y line=middle,
			x label style={at={(current axis.right of origin)},anchor=north, below},
			xlabel=$x$, ylabel={},
			xmin=-4, xmax=4,
			ymin=-0.0035, ymax=1.645,
			width=1\linewidth,
			height=\graphRatio\linewidth,
			ytick=\empty,
			xtick={0},
			hide obscured x ticks=false
			]
				\addplot[black, smooth][domain=-4:0, samples=\sample]{(x+1)^2-0.27*(x+1)^4+0.02*(x+1)^6+\yoffset} node[pos=.49, above right] {$\gtilde(x)$};
				\addplot[black, smooth][domain=0:4, samples=\sample]{(x-1)^2-0.27*(x-1)^4+0.02*(x-1)^6+\yoffset};
				\addplot[red, dash pattern=on 1pt off 2pt,shorten <=1, very thick][domain=-4:-3.54, samples=\sample]{(x+1)^2-0.27*(x+1)^4+0.02*(x+1)^6+\yoffset};
				\addplot[red, dash pattern=on 1pt off 2pt,shorten <=1, very thick] coordinates { (-3.54, {0.584+\yoffset})(-1.05,\yoffset)};
				\addplot[red, dash pattern=on 1pt off 2pt,shorten <=1,very  thick]coordinates{(-1,\yoffset) (1,\yoffset)};
				\addplot[red, dash pattern=on 1pt off 2pt,shorten <=1, very thick] coordinates {(1.05,\yoffset) (3.55, {0.584+\yoffset})} node[pos=.49, below right]{$C\gtilde(x)$};
				\addplot[red, dash pattern=on 1pt off 2pt,shorten <=1, very thick][domain=3.54:4, samples=\sample]{(x-1)^2-0.27*(x-1)^4+0.02*(x-1)^6+\yoffset};
			\end{axis}
		\end{tikzpicture}
	\end{minipage}
	\caption{\footnotesize The monotone-convex envelope $\Cm g$ of $g\col[0,\infty)\to\R$ can be obtained via the convex envelope $\Cm g$ of the even extension $\gtilde$ of $g$.}
\end{figure}

Similar to the geodesic distance considered in Section \ref{sectionContains:geodesicDistance}, the expression $\sqrt{\norm{F}^2-2\.\det F}$ can be characterized as a measure of distance to the conformal group:\footnote{Note that the Euclidean distance can be considered a linearization of the geodesic distance and, unlike the latter, does not take into account the Lie group structure of either $\GLp(2)$ or $\CSO(2)$. For a detailed discussion of the relation between these distance measures and their applicability to the deformation gradient in nonlinear mechanics, see \cite{agn_neff2015geometry}.} since the closure $\CSO(2)\cup\{0\}$ of $\CSO(2)$ is a linear subspace\footnote{More generally \cite[p.24]{vsilhavy2004energy}, the set $[0,\infty)\cdot\SO(n)$ is convex for $n\geq1$.} of $\R^{2\times2}$ with an orthonormal basis given by
\[
	A_1 = \frac{\sqrt{2}}{2}\.\matr{1&0\\0&1}\,,\qquad A_2 = \frac{\sqrt{2}}{2}\.\matr{0&1\\-1&0}\,,
\]
thus
\begin{align*}
	\de^2(F,\CSO(2)) \colonequals& \inf_{A\in\CSO(2)} \norm{F-A}^2\\
	=&\; \norm{F}^2 - (\iprod{F,A_1}^2+\iprod{F,A_2}^2)
	= \norm{F}^2 - \frac12\.((F_{11}+F_{22})^2 + (F_{12}-F_{21})^2)\\
	=&\; \norm{F}^2 - \frac12\.(F_{11}^2+F_{22}^2+F_{12}^2+F_{21}^2 + 2\.(F_{11}\.F_{22}-F_{12}\.F_{21}))
	\;=\; \frac12\.(\norm{F}^2 - 2\.\det F)\,,
\end{align*}
where $\iprod{\cdot,\cdot}$ denotes the canonical inner product on $\R^{2\times2}$. Therefore, the energy functions considered in Lemma \ref{lemma:dacorognaEnergy} depend only on the Euclidean distance of $F$ to $\CSO(2)$.

\section{Connections to the Grötzsch problem}
\label{section:groetzsch}
Proposition~\ref{proposition:convexityCharacterization} negatively answers a conjecture by Adamowicz \cite[Conjecture~1]{adamowicz2007grotzsch}, which (in the two-dimensional case) states that if a conformal energy $W\col\GLp(2)\to\R$ with $W(F)=\Psi(\K(F))$ is polyconvex, then $\Psi$ is non-decreasing and convex. A direct counterexample is given by $W(F)=\frac{\lambdamax}{\lambdamin}$, which is polyconvex due to criterion~v) in Proposition \ref{proposition:convexityCharacterization} with $h(t)=t$, but the representation $W(F)=\Psi(\K(F))=e^{\arccosh(\K(F))}$ is not convex with respect to $\K(F)$.

Furthermore, criterion~iv) in Proposition~\ref{proposition:convexityCharacterization} reveals a direct connection between the so-called \emph{Grötzsch property} and quasiconvexity in the two-dimensional case.

\begin{definition}[\cite{adamowicz2007grotzsch}]
\label{definition:groetzschProperty}
	Let $W\col\GLp(n)\to\R$ be conformally invariant. Then $W$ satisfies the \emph{Grötzsch property} if for every $\Q=[0,a_1]\times\cdots\times[0,a_n]\subset\R^n$ and every $\Q'=[0,a'_1]\times\cdots\times[0,a'_n]\subset\R^n$, the functional
	\[
		I\col\mathcal{A}\to\R\,,\quad I(\varphi) = \int_{\Q} W(\grad\varphi)\,\dx
	\]
	attains its minimum at the affine mapping $\varphi\col\Q\to\Q'$, $\varphi(x)=(\frac{a'_1}{a_1}x_1,\dotsc,\frac{a'_n}{a_n}x_n)$; here, the set $\mathcal{A}$ of admissible functions consists of all $\varphi\in W^{1,p}_{\mathrm{loc}}(\Q;\Q'),\.p\geq n$ with $\det\grad\varphi>0$ that satisfy the \emph{Grötzsch boundary conditions}, i.e.\ map each $(n-1)$--dimensional face of $\Q$ to the corresponding face of $\Q'$.
\end{definition}

Note that the boundary condition imposed in Definition \ref{definition:groetzschProperty} does not require the admissible mappings to be affine at the boundary, since each of the faces can be mapped to the corresponding ones in an arbitrary (possibly non-linear) manner.

In the two-dimensional case, the representation of the energy in terms of the singular values allows us to infer the quasiconvexity from the Grötzsch property in a particularly straightforward way.

\begin{proposition}
	Let $W\col\GLp(2)\to\R$ be conformally invariant and satisfy the Grötzsch property for all $\Q,\Q'$. Then $W$ is polyconvex.
\end{proposition}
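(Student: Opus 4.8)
The plan is to reduce everything to the one-dimensional convexity criterion of Proposition~\ref{proposition:convexityCharacterization}: writing $W(F)=h(\lambda_1/\lambda_2)$ for the uniquely determined representing function $h\col(0,\infty)\to\R$ from Lemma~\ref{lemma:representations}, it suffices to prove that $h$ is convex on $(0,\infty)$, since criterion~v) there is equivalent to polyconvexity~i). Thus the entire task is to extract a Jensen-type inequality for $h$ from the Grötzsch property, and the natural mechanism is a rank-one laminate over a rectangle.

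First I would fix the unit square $\Q=[0,1]\times[0,1]$ and, for an arbitrary $c>0$, the target rectangle $\Q'=[0,c]\times[0,1]$. The distinguished affine map from Definition~\ref{definition:groetzschProperty} is $\varphi_0(x)=(c\,x_1,\,x_2)$, with $\grad\varphi_0\equiv\diag(c,1)$, and since the singular values of $\diag(c,1)$ are $c$ and $1$ (in some order) and $h(x)=h(1/x)$, we get $I(\varphi_0)=W(\diag(c,1))=h(c)$. Next, given any $c_1,c_2>0$ and $\theta\in(0,1)$ with $\theta c_1+(1-\theta)c_2=c$, I would introduce the competitor
\[
	\varphi\col\Q\to\R^2\,,\qquad
	\varphi(x)=\begin{cases}
		(c_1 x_1,\; x_2) & \text{for } x_1\le\theta\,,\\[0.2em]
		(c_1\theta+c_2(x_1-\theta),\; x_2) & \text{for } x_1\ge\theta\,,
	\end{cases}
\]
which is Lipschitz (the two affine pieces agree on $\{x_1=\theta\}$), satisfies $\grad\varphi\in\{\diag(c_1,1),\diag(c_2,1)\}$ a.e.\ with positive Jacobian, and maps $\Q$ into $\Q'$ because $0<c_1\theta=c-(1-\theta)c_2<c$. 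Checking that $\varphi$ obeys the Grötzsch boundary conditions is then routine bookkeeping: the bottom face is sent onto $[0,c_1\theta]\times\{0\}\cup[c_1\theta,c]\times\{0\}=[0,c]\times\{0\}$, the top face likewise onto the top face of $\Q'$, and the left and right faces onto $\{0\}\times[0,1]$ and $\{c\}\times[0,1]$, so $\varphi$ belongs to the admissible class $\mathcal{A}$.

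Applying the Grötzsch property, $I(\varphi_0)\le I(\varphi)$, and since $|\Q|=1$ this reads
\[
	h\bigl(\theta c_1+(1-\theta)c_2\bigr)\;=\;h(c)\;\le\;\theta\,h(c_1)+(1-\theta)\,h(c_2)\,.
\]
As $c_1,c_2\in(0,\infty)$ and $\theta\in(0,1)$ were arbitrary, $h$ is convex on $(0,\infty)$, whence $W$ is polyconvex by Proposition~\ref{proposition:convexityCharacterization}.

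The only real obstacle is administrative rather than conceptual: one has to verify carefully that the laminate is a genuine Grötzsch competitor — continuity across the interface, image contained in $\Q'$, the face-to-face conditions, and membership in $W^{1,p}_{\mathrm{loc}}(\Q;\Q')$ with $\det\grad\varphi>0$ — and one must keep track of the ordering of singular values so that $W(\diag(c,1))=h(c)$ holds for every $c>0$, including $c<1$. Beyond that, the argument is just the already-established equivalence of the convexity of $h$ with the polyconvexity of $W$.
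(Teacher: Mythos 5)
Your proposal is correct and uses essentially the same argument as the paper: a piecewise affine (rank-one laminate) competitor on a rectangle, compared against the affine Grötzsch map via the characterization in Proposition~\ref{proposition:convexityCharacterization}. The only cosmetic difference is that you argue directly for convexity of $h$ (criterion~v), normalizing the second singular value to $1$, whereas the paper argues by contraposition through the separate convexity of $g$ (criterion~iv) with an arbitrary fixed $\lambda_2$; the laminate construction and the conclusion are identical.
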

\begin{proof}
	Assume that $W$ is not polyconvex. Then $g\col(0,\infty)^2\to\R$ with $W(F)=g(\lambda_1,\lambda_2)$ is not separately convex according to criterion iv) in Proposition \ref{proposition:convexityCharacterization}. Therefore, there exist $\lambda_1,\lambdahat_1,\lambda_2\in(0,\infty)$ and $t\in(0,1)$ such that
	\[
		tg(\lambda_1,\lambda_2)+(1-t)g(\lambdahat_1,\lambda_2)<g(t\lambda_1+(1-t)\lambdahat_1,\lambda_2)\,.
	\]
	Now, let $\Q=[0,1]^2$ and $\Q'=[0,t\lambda_1+(1-t)\lambdahat_1]$, and define $\varphi\col\Q\to\Q'$ by
	\begin{align*}
		\varphi(x) \colonequals
		\begin{cases}
			\hfill\matr{\lambda_1\.x_1\\\lambda_2\.x_2} &:\; x_1\leq t\,,\\[2.1em]
			\matr{\lambdahat_1\.x_1+t(\lambda_1-\lambdahat_1)\\\lambda_2 x_2} &:\; x_1>t\,.
		\end{cases}
	\end{align*}
	Then $\varphi$ satisfies the Grötzsch boundary conditions, $\varphi\in W^{1,p}(\Q;\Q')$ for all $p\geq1$ and
	\begin{align*}
			\int_\Q W(\grad\varphi)\,\dx &= \int_{[0,t]\times[0,1]} W(\diag(\lambda_1,\lambda_2))\,\dx + \int_{[t,1]\times[0,1]} W(\diag(\lambdahat_1,\lambda_2))\,\dx\\
			&= \int_{[0,t]\times[0,1]} g(\lambda_1,\lambda_2)\,\dx + \int_{[t,1]\times[0,1]} g(\lambdahat_1,\lambda_2)\,\dx= tg(\lambda_1,\lambda_2)+(1-t)g(\lambdahat_1,\lambda_2)\\
			&<g(t\lambda_1+(1-t)\lambdahat_1,\lambda_2)
			\;=\; W(F_0) = W(F_0)\cdot\abs{\Q}\,,
	\end{align*}
	where $F_0=\diag(t\lambda_1+(1-t)\lambdahat_1,\lambda_2)$ is the boundary-compatible linear mapping from $\Q$ to $\Q'$. Therefore, $W$ does not satisfy the Grötzsch condition.
\end{proof}
\section{The convex envelope of conformally invariant planar energies}\label{appendix:convexity}

The quasiconvex envelopes computed in Section \ref{section:applications} are, in general, not convex, i.e.\ $QW(F)>CW(F)$ for some $F\in\GLp(2)$. In fact, the following explicit computation shows that the convex envelope of any conformally invariant energy is necessarily constant.

Recall that the convex envelope $CW$ of an energy $W\col M\to\R$ with a non-convex domain $M\subset\Rnn$ (e.g.\ $M=\GLp(2)$) is defined as the restriction $C\Wtilde|_M$ of the convex envelope $C\Wtilde$ of the function
\[
	\Wtilde\col\conv(M)\to\R\cup\{+\infty\}\,,\quad \Wtilde(F)=
	\begin{cases}
		W(F) &:\; F\in M\,,\\
		\hfill+\infty &:\; F\notin M
	\end{cases}
\]
to $M$, where $\conv(M)$ denotes the convex hull of the set $M$. Note that $\Wtilde$ can be further extended to a convex function $\What$ on $\Rnn$ by setting $\What(F)=+\infty$ for all $F\notin\conv(M)$.

\begin{proposition}
\label{prop:convexEnvelope}
	Let $W\col\GLp(2)\to\R$ be conformally invariant and bounded below. Then 
	\[
		CW(F)=\inf\left\{W(\Ftilde)\;|\;\Ftilde\in\GLp(2)\right\}
	\]
	for all $F\in\GLp(2)$.
\end{proposition}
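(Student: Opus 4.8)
The plan is to prove the two inequalities in $CW(F)=m$ separately, where $m\colonequals\inf\{W(\Ftilde)\setvert\Ftilde\in\GLp(2)\}$ (finite by assumption). The lower bound $CW(F)\geq m$ is immediate: the constant function with value $m$ is convex and minorizes $\widetilde W$ on $\conv(\GLp(2))$, hence it minorizes $C\widetilde W=CW$. For the upper bound $CW(F)\leq m$ I would use only the scaling invariance of $W$ together with the fact that $-\id\in\SO(2)$: these place the zero matrix in $\conv(\GLp(2))$ with envelope value at most $m$, and then an arbitrary $F\in\GLp(2)$ can be ``pulled back'' towards $0$ along the ray $\{sF\setvert s>0\}$.

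In detail: since $\det(-\id)=1$ we have $\pm\id\in\GLp(2)$, so $0=\tfrac12\.\id+\tfrac12\.(-\id)\in\conv(\GLp(2))$ and $C\widetilde W(0)$ is well defined. Moreover, for every $G\in\GLp(2)$ we have $-G=(-\id)\.G\in\GLp(2)$, and objectivity (left $\SO(2)$-invariance, applied with the rotation $-\id$) gives $W(-G)=W(G)$; hence, by convexity of $C\widetilde W$ and the inequality $C\widetilde W\leq\widetilde W$,
\[
	C\widetilde W(0)\;\leq\;\tfrac12\.C\widetilde W(G)+\tfrac12\.C\widetilde W(-G)\;\leq\;\tfrac12\.W(G)+\tfrac12\.W(-G)\;=\;W(G)\,,
\]
and taking the infimum over $G\in\GLp(2)$ gives $C\widetilde W(0)\leq m$.

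Then, for fixed $F\in\GLp(2)$ and $t\in(0,1)$, the identity $F=t\.(F/t)+(1-t)\.0$ exhibits $F$ as a convex combination of points of $\conv(\GLp(2))$, so convexity of $C\widetilde W$, the bound $C\widetilde W\leq\widetilde W$ on $\GLp(2)$, the scaling invariance $W(F/t)=W(F)$, and the previous step yield
\[
	CW(F)=C\widetilde W(F)\;\leq\;t\.C\widetilde W(F/t)+(1-t)\.C\widetilde W(0)\;\leq\;t\.W(F)+(1-t)\.m\,.
\]
As $W(F)\geq m$ and $m$ is finite, letting $t\to0^+$ gives $CW(F)\leq m$, which together with the lower bound yields $CW(F)=m$.

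I do not expect a conceptual obstacle; the points needing care are the bookkeeping for the non-convex domain (that $0\in\conv(\GLp(2))$ and $C\widetilde W(0)$ is finite, both coming from $\pm\id\in\GLp(2)$) and keeping track of exactly which invariances are invoked -- only objectivity (through $W(-G)=W(G)$) and isochority (through $W(F/t)=W(F)$), with isotropy playing no role. One could instead argue more computationally, e.g.\ by noting that for $\kappa>1$ the level set $\{G\in\GLp(2)\setvert\lambda_1(G)/\lambda_2(G)=\kappa\}$ is a cone invariant under $G\mapsto-G$, so that its convex hull is a linear subspace, and in fact all of $\R^{2\times2}$; since $W$ is constant on that set this bounds $CW$ by the infimum of $W$ over non-conformal matrices, and conformal $F$ are then handled by a ``mostly conformal'' splitting of small weight. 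The argument above is shorter and subsumes this, so that is the one I would write up.
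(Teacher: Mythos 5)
Your proof is correct, and it is organized differently from the paper's. The paper first invokes the fact that the convex envelope preserves conformal invariance (adapting Buttazzo et al., only sketched in a footnote), then extends $CW$ to a convex function $\What$ on $\R^{2\times2}$ and argues along the segment $\{tF \setvert t\in[-1,1]\}$: since $CW(tF)=CW(F)$ for all $t\neq0$, convexity of $\What$ through $t=0$ forces $CW(F)=\What(0)$ for every $F$, i.e.\ $CW$ is constant (the identification of the constant with $\inf W$ is left implicit). You instead work directly with the invariances of $W$ itself and the definition of $C\Wtilde$: midpoint convexity at $\pm G$ together with $W(-G)=W((-\id)G)=W(G)$ gives $C\Wtilde(0)\leq\inf W$, and the splitting $F=t\.(F/t)+(1-t)\.0$ with $W(F/t)=W(F)$ pulls every $F$ down to that value as $t\to0^+$; the lower bound via the constant minorant then closes the argument. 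Both proofs rest on the same planar facts ($-\id\in\SO(2)$ and scaling invariance, so that $0\in\conv(\GLp(2))$ is reachable at the right energy level), but your route is more self-contained, since it bypasses the envelope-invariance lemma entirely and proves the stated formula $CW\equiv\inf W$ explicitly rather than only constancy; the paper's route is shorter once that lemma is granted and isolates the structural statement that $CW$ is itself conformally invariant. Minor bookkeeping points you already handle correctly: boundedness below guarantees $C\Wtilde\geq\inf W$ is finite, and all points used ($0$, $\pm G$, $F/t$) lie in $\conv(\GLp(2))$.
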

\begin{proof}
	We only need to show that $CW$ is constant on $\GLp(2)$. First, observe that the convex envelope of $W$ is conformally invariant.\footnote{%
		The proof of the bi-$\SO(2)$-invariance of $CW$ given by Buttazzo et al.\ \cite[Therem 3.1]{buttazzo1994envelopes} can easily be adapted to include the scaling invariance.%
	}
	By the definition of convexity on $\GLp(2)$ employed here, $CW$ must be the restriction of a convex function $\What\col\R^{2\times2}\to\R$ to $\GLp(2)$. Let $b\colonequals \What(0)$. Then for all $F\in\GLp(2)$ and $t\in[-1,1]$, we find
	\[
		\What(tF) =
		\begin{cases}
			CW(tF) = CW(F) &:\; t\neq0\,,\\
			\hfill b &:\; t=0\,,
		\end{cases}
	\]
	thus $CW(F)=b$ due to the convexity of $\What$.%
\end{proof}

As indicated in Section \ref{section:convexityProtperties}, analytical methods for finding generalized convex envelopes have often been based on the observation that $RW=CW$ for certain classes of energy functions $W$ and the subsequent computation of the classical convex envelope $CW$; for example, this method is applicable to the St.\,Venant--Kirchhoff energy function \cite{ledret1995quasiconvex} $\WSVK(F)=\frac{\mu}{4}\.\norm{F^TF-\id}^2+\frac{\lambda}{8}\left(\tr(F^TF-\id)\right)^2$.

One of the most frequently cited examples of an isotropic and objective energy function $W$ with $RW=QW=PW\neq CW$ is the example of Kohn and Strang \cite{kohn1983explicit,kohn1986optimal}, where, in the $\R^{2\times 2}$-case \cite{zhang2002elementary,dovsly1997remark},
\begin{align*}
	W(F)=
	\begin{cases}
		1+\norm{F}^2 &:\; F \neq 0\,,\\
		\hfill 0 &:\; F=0\,,
	\end{cases}
	\qquad\text{with}\qquad
	CW(F)&=
	\begin{cases}
		W(F) &:\; \norm{F}\geq 1\,,\\
		\hfill 2\.\norm{F} &:\; \norm{F}< 1\,,
	\end{cases}
	\\
	\text{but}\qquad
	QW(F)&=
	\begin{cases}
		\hfill W(F) &:\; \norm{F}+2\.\det F\geq 1\,,\\
		2\.\sqrt{\norm{F}^2+2\.\det F}-2\.\det F &:\; \norm{F}+2\.\det F< 1\,.
	\end{cases}
\end{align*}
Furthermore, if $W\col\GLp(n)\to\R$ is a volumetric energy function of the form $W(F)=f(\det F)$ with $f\col(0,\infty)\to\R$, then $RW(F)=QW(F)=PW(F)=Cf(\det F)$ and, in general, $CW(F)<QW(F)$, see \cite[Theorem 6.24]{Dacorogna08}.
\end{appendix}
\end{document}